\newtheorem{theoremcounter}{Theorem Counter}[section]
\theoremstyle{definition}
\newtheorem{definition}[theoremcounter]{Definition}
\newtheorem{remark}[theoremcounter]{Remark}
\newtheorem{example}[theoremcounter]{Example}
\theoremstyle{plain}
\newtheorem{lemma}[theoremcounter]{Lemma}
\newtheorem{proposition}[theoremcounter]{Proposition}
\newtheorem{corollary}[theoremcounter]{Corollary}
\newtheorem{conjecture}[theoremcounter]{Conjecture}
\newtheorem{theorem}[theoremcounter]{Theorem}
\newtheorem{question}[theoremcounter]{Question}
\numberwithin{equation}{section}
\newcommand{\Z}{\mathbb{Z}}
\newcommand{\klgobel}{$(k,l)$-G\"{o}bel }
\begin{document}
% --------------------------------------------------------------------------

\title[]{A note on non-integrality of the $(k,l)$-G\"{o}bel sequences} 

\author{Yuh Kobayashi}
\address{Department of Mathematical Sciences, Aoyama Gakuin University, 5-10-1 Fuchinobe, Chuo-ku, Sagamihara, Kanagawa, 252-5258, Japan}
\email{kobayashi@math.aoyama.ac.jp}

\author{Shin-ichiro Seki}
\address{Department of Mathematical Sciences, Aoyama Gakuin University, 5-10-1 Fuchinobe, Chuo-ku, Sagamihara, Kanagawa, 252-5258, Japan}
\email{seki@math.aoyama.ac.jp}

\subjclass[2020]{Primary 11B37; Secondary 11B50}
\keywords{G\"{o}bel sequence, Legendre symbol.}
\thanks{This research was supported by JSPS KAKENHI Grant Numbers JP22K13960 (Kobayashi) and JP21K13762 (Seki).}

% --------------------------------------------------------------------------
\begin{abstract}
The $(k,l)$-G\"{o}bel sequences defined by Ibstedt remain integers for the first (in some cases, many) terms, but for selected values of $(k,l)$, computations show that the terms eventually stop being integers.
It is still unresolved whether the integrality of these sequences breaks down for all $k, l\geq 2$.
In this article, we prove the non-integrality for a specific class of $(k,l)$ values.
Our proof is based on geometric arguments related to the distribution of quadratic residues modulo a prime.
\end{abstract}
% --------------------------------------------------------------------------

\maketitle

% --------------------------------------------------------------------------
\section{Introduction}\label{sec:Introduction}
\emph{G\"{o}bel's sequence} $(g_n)_{n\geq 0}$ is defined by the following recurrence relation:
\[
g_n=\frac{2+g_1^2+\cdots+g_{n-1}^2}{n}
\]
with $g_1=2$ (\cite[A003504]{Sloane}).
This sequence remains an integer up to $g_{42}$, but in 1975, Lenstra found that $g_{43}$ is no longer an integer.
Since then, there has been little research on G\"{o}bel's sequence.
However, after G\"{o}bel's sequence was featured in the Japanese manga \emph{Seisu-tan} \cite{KobayashiSeki2023} written by the second author of this article and Doom Kobayashi in 2023, it inspired two recent articles \cite{MatsuhiraMatsusakaTsuchida2024} and \cite{GimaEtal2024+}.
For the history of G\"{o}bel's sequence, refer to \cite{MatsuhiraMatsusakaTsuchida2024}.

In \cite{Ibstedt1990}, Ibstedt generalized G\"{o}bel's sequence to one with two parameters, $k$ and $l$. For integers $k\geq 2$ and $l\geq 0$, the sequence $(g_{k,l}(n))_{n\geq 1}$, which is called the \emph{$(k,l)$-G\"obel sequence}, is defined by
\[
g_{k,l}(n)=\frac{1}{n}\left(l+\sum_{i=1}^{n-1}g_{k,l}(i)^k\right).
\]
We can check that $g_{2,2}(n)=g_n$ holds for all $n$ and that the recurrence relation for $(k,l)$-G\"obel sequence can be rewritten as
\begin{equation}\label{eq:rec_of_(k,l)-Goebel}
(n+1)g_{k,l}(n+1)=g_{k,l}(n)\cdot\bigl(n+g_{k,l}(n)^{k-1}\bigr)
\end{equation}
with the initial condition $g_{k,l}(1)=l$.
In particular, the sequence $(g_{k,2}(n))_{n\geq 1}$ is referred to as the \emph{$k$-G\"{o}bel sequence}.

We are interested in whether the $(k,l)$-G\"{o}bel sequences stop being an integer, as in the case of G\"{o}bel's original sequence.
We define
\[
N_{k,l}\coloneqq\inf\{n\in\Z_{\geq 1} \mid g_{k,l}(n)\not\in\Z\},
\]
where $N_{k,l}=\infty$ if $g_{k,l}(n)$ remains an integer for all $n$.
Also, let $N_k\coloneqq N_{k,2}$ (\cite[A108394]{Sloane}).
We are particularly interested in the behavior of sequences $(N_{k,l})_{k,l\geq 2}$ or $(N_k)_{k\geq 2}$.
We note that the cases $l=0$ and $l=1$ are included in the definition for convenience, where we have $g_{k,0}(n)=0$ and $g_{k,1}(n)=1$ for all $n$, and in particular, $N_{k,0}=N_{k,1}=\infty$.

In \cite{MatsuhiraMatsusakaTsuchida2024}, Matsuhira, Matsusaka, and Tsuchida proved that
\[
\min_{k\geq 2}N_k=19
\]
and $N_k=19$ if and only if $k\equiv 6, 14 \pmod{18}$.
In \cite{GimaEtal2024+}, Gima, Matsusaka, Miyazaki, and Yara proved
\[
\min_{k,l\geq 2}N_{k,l}=7
\]
and $N_{k,l}=7$ if and only if $k\equiv 2 \pmod{6}$ and $l\equiv 3 \pmod{7}$.

Whereas the minimum values have been determined, the fundamental problem of the finiteness of $N_k$ for all $k\geq 2$ remains unresolved.
In fact, no approach independent of computer searches has been found so far. 
By considering the general initial value case, the following weaker problem can be formulated:
\begin{quote}
\emph{For any $k\geq 2$, does there exist some $l\geq 2$ such that $N_{k,l}$ is finite?}
\end{quote}
We partially solve this problem for $k$ belonging to an infinite class of even integers that satisfy certain conditions (see \cref{cor:main_application}).

As we are interested in the non-integrality of \klgobel sequences, we here recall the method of proving it.
Since $g_{43}\approx 5.4\times 10^{178485291567}$, fully calculating $g_{43}$ with a computer is highly impractical.
(See \cite[Theorem~4]{GimaEtal2024+} for the asymptotic expansion formula of general $(k,l)$-G\"{o}bel sequences.)
However, by using the recurrence relation and iteratively calculating the congruences modulo $43$ in $\Z_{(43)}$, we find that $43g_{43}\equiv 24 \pmod{43}$, which implies that $g_{43}\not\in\Z$.
Here, for a prime number $p$, $\Z_{(p)}$ is the localization of $\Z$ at $(p)$.
Similarly, if, based on such iterated calculations, we find a prime number $p$ such that $pg_{k,l}(p)\not\equiv0\pmod{p}$ in $\Z_{(p)}$, we can conclude the non-integrality of the $(k,l)$-G\"{o}bel sequence (and more strongly, that $N_{k,l}\leq p$).

In \cite[Section~4]{GimaEtal2024+}, the authors posed the following question:
\begin{quote}
``\emph{for any pair of integers $k$, $l\geq 2$, does there exist (infinitely many) $p \in \mathcal{P}$ such that $g_{k,l}(p) \not\in\Z_{(p)}$.}''
\end{quote}
Here, their $\mathcal{P}$ is the set of all prime numbers.
The answer to this question remains unknown, and whether such primes exist --- and if so, which ones --- cannot be determined until explicit computations are made.
Another question can also be posed by switching the roles of $(k,l)$ and $p$ as follows:
\begin{quote}
\emph{For a given prime $p$, which $k$, $l\geq 2$ satisfy $g_{k,l}(p) \not\in\Z_{(p)}$?}
\end{quote}
Because it depends only on the residue class $pg_{k,l}(p) \bmod{p}$, by Fermat's little theorem, it is sufficient to consider only the cases where $0\leq k\leq p-2$ and $0\leq l\leq p-1$.
Thus, for each given $p$, the values of $k$ and $l$ that satisfy $g_{k,l}(p) \not\in\Z_{(p)}$ can be completely determined through calculations using a computer.

While performing this calculation, we discovered a general phenomenon: for primes $p$ satisfying $p \equiv 1 \pmod{4}$, when $k=(p-1)/2$, even values of $l$ for which $g_{k,l}(p) \not\in \Z_{(p)}$ appear consecutively (see \cref{fig:blue_dots}).
\begin{figure}
\centering
\begin{subfigure}{0.32\textwidth}
    \includegraphics[height=\textwidth]{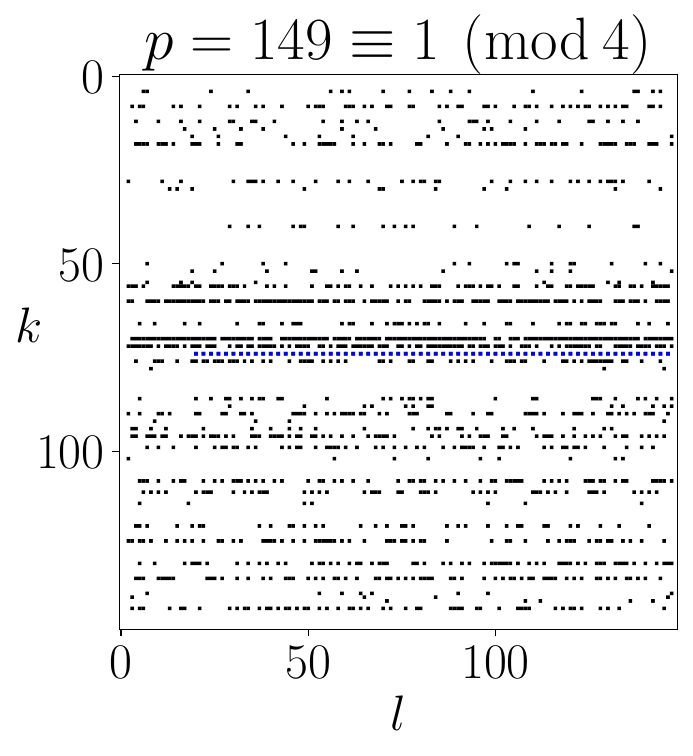}
\end{subfigure}
\hfill
\begin{subfigure}{0.32\textwidth}
    \includegraphics[height=\textwidth]{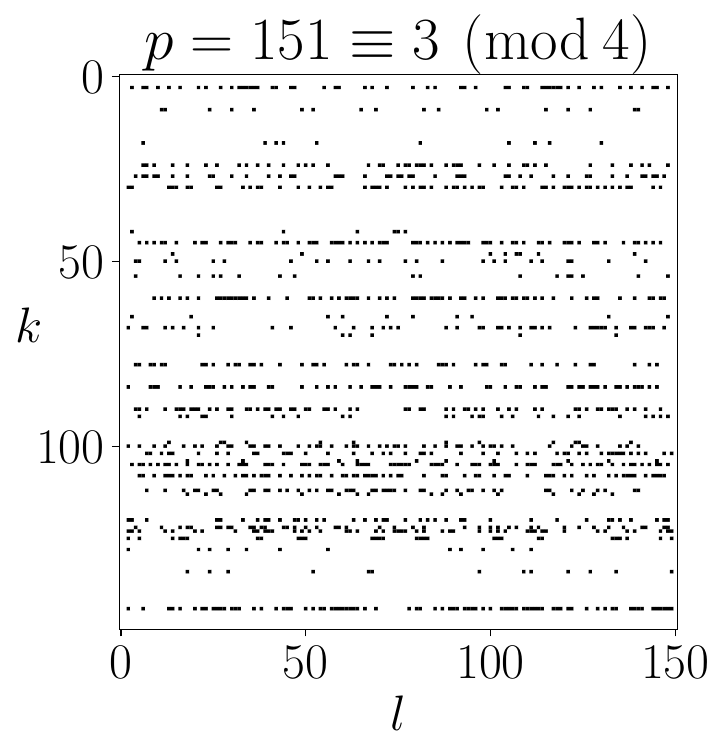}
\end{subfigure}
\hfill
\begin{subfigure}{0.32\textwidth}
    \includegraphics[height=\textwidth]{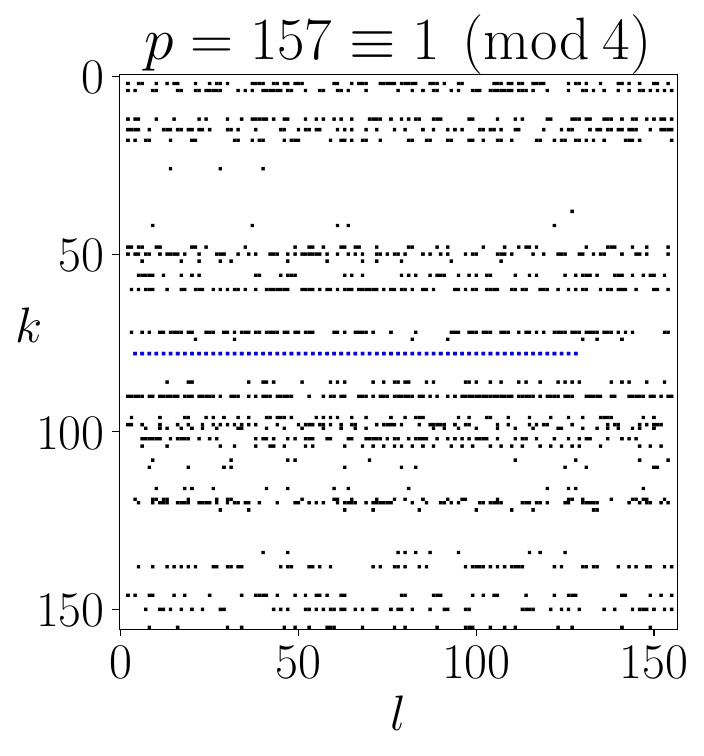}
\end{subfigure}
        
\caption{Blue ($k=(p-1)/2$) and black ($k\neq(p-1)/2$) dots represent the pair $(k,l)$ for which $g_{k,l}(p) \notin \mathbb{Z}_{(p)}$.}
\label{fig:blue_dots}
\end{figure}
In this article, we prove that this phenomenon indeed holds true.
\begin{theorem}\label{thm:main3}
Let $p$ be an odd prime number.
\begin{enumerate}
\item Assume that $p\equiv 3\pmod{4}$.
For any integer $l$ satisfying $0\leq l < p$, we have
\[
g_{\frac{p-1}{2},l}(p)\in\Z_{(p)}.
\]
\item Asuume that $p\equiv 1\pmod{4}$.
For any odd integer $l$ satisfying $0\leq l < p$, we have
\[
g_{\frac{p-1}{2},l}(p)\in\Z_{(p)}.
\]
Furthermore, if $p\geq 13$, there exists a unique decomposition
\[
\{0,2,4,\dots, p-1\}=I_p^{\mathrm{L}}\sqcup J_p\sqcup I_p^{\mathrm{R}}
\]
such that for even integers $l_{\mathrm{L}}$ and $l_{\mathrm{R}}$ with $2 \leq l_{\mathrm{L}} < l_{\mathrm{R}} \leq p-1$, the following holds$:$
\begin{align*}
I_p^{\mathrm{L}} &= \{l \in 2\Z \mid 0 \leq l < l_{\mathrm{L}}\}, \\
J_p &= \{l \in 2\Z \mid l_{\mathrm{L}} \leq l < l_{\mathrm{R}}\}, \\
I_p^{\mathrm{R}} &= \{l \in 2\Z \mid l_{\mathrm{R}} \leq l \leq p-1\}
\end{align*}
and for any even integer $l$ satisfying $0 \leq l < p$, we have
\[
g_{\frac{p-1}{2},l}(p)\in\Z_{(p)} \text{ if and only if } l\in I_p^{\mathrm{L}}\sqcup I_p^{\mathrm{R}}.
\]
\end{enumerate}
\end{theorem}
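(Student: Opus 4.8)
My plan is to convert the whole question into the behaviour of an explicit nearest-neighbour walk on $\mathbb{Z}/p\mathbb{Z}$ and then read off $g_{\frac{p-1}{2},l}(p)\bmod p$ from where the walk ends. Since the denominators occurring in $g_{k,l}(1),\dots,g_{k,l}(p-1)$ are all prime to $p$, each $g_{k,l}(n)$ lies in $\Z_{(p)}$ for $n\le p-1$; writing $a_n\equiv g_{k,l}(n)\pmod p$ and using \eqref{eq:rec_of_(k,l)-Goebel} at $n=p-1$, integrality of $g_{k,l}(p)$ is equivalent to $a_{p-1}\bigl(a_{p-1}^{k-1}-1\bigr)=0$ in $\mathbb{F}_p$. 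For $k=\tfrac{p-1}{2}$ Euler's criterion gives $a^{k-1}=\left(\frac{a}{p}\right)a^{-1}$ for $a\neq 0$, so this condition becomes $a_{p-1}\in\{0,1\}$, together with $a_{p-1}=-1$ precisely when $p\equiv 3\pmod 4$. Feeding the same identity into \eqref{eq:rec_of_(k,l)-Goebel} and setting $c_n\coloneqq n\,a_n$ turns the recurrence into
\[
c_1=l,\qquad c_{n+1}=c_n+\left(\frac{n\,c_n}{p}\right)\quad(1\le n\le p-2),
\]
with the convention $\left(\frac 0p\right)=0$. Thus $(c_n)$ is a unit-step walk on $\mathbb{Z}/p\mathbb{Z}$, absorbed at $0$, whose step direction at time $n$ is the Legendre symbol of $n\,c_n$; and $g_{\frac{p-1}{2},l}(p)\in\Z_{(p)}$ iff $c_{p-1}\in\{0,-1\}$, with the extra possibility $c_{p-1}=1$ allowed only when $p\equiv 3\pmod 4$.

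Next I would isolate the invariant trajectories and the parity of the walk. Because $\left(\frac{n\cdot n}{p}\right)=1$, the diagonal $c_n=n$ is invariant and ends at $c_{p-1}=p-1\equiv-1$; when $p\equiv 3\pmod 4$ the anti-diagonal $c_n=p-n$ is likewise invariant (here $\left(\frac{-n\cdot n}{p}\right)=\left(\frac{-1}{p}\right)=-1$) and ends at $c_{p-1}=1$. As long as the walk avoids $0$ it stays on the integer segment $\{1,\dots,p-1\}$ (it cannot wrap without passing through the absorbing state), so $c_n\equiv l+n-1\pmod 2$; in particular $c_{p-1}$ is even for odd $l$ and odd for even $l$. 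Two unit-step walks of the same parity cannot cross without coinciding, and for every walk $c_n-n$ is non-increasing while $c_n+n$ is non-decreasing. For odd $l$ the walk shares the diagonal's parity and starts weakly above it, so it can never be strictly above at time $p-1$; hence it merges with the diagonal (ending at $-1$) or is absorbed (ending at $0$), both giving integrality, which settles the odd-$l$ statements in both parts. For $p\equiv 3\pmod 4$ and even $l$ the mirror argument against the anti-diagonal, which the walk starts weakly below, forces $c_{p-1}\in\{0,1\}$, proving part (1).

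For part (2) with even $l$ the anti-diagonal is no longer invariant, and the parity count shows $g_{\frac{p-1}{2},l}(p)\in\Z_{(p)}$ holds exactly when the walk is absorbed, since an unabsorbed walk ends at an odd value, hence at neither $0$ nor $-1$. To obtain the interval structure I would use that the walks from distinct even $l$ are linearly ordered (no crossing) and that the absorbing events are ordered by height: if the walk from $l$ reaches $0$ from below (through $c=1$) then so does the walk from $l-2$, and if it reaches $0$ from above (through $c=p-1$) then so does the walk from $l+2$. Hence $\{l\text{ even}:\text{absorbed from below}\}$ is a down-set and $\{l\text{ even}:\text{absorbed from above}\}$ is an up-set in $\{0,2,\dots,p-1\}$; these are disjoint, contain $0$ and $p-1$ respectively, and their complement is the set of unabsorbed $l$. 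This yields precisely the decomposition $I_p^{\mathrm L}\sqcup J_p\sqcup I_p^{\mathrm R}$ with the stated interval shapes, and uniqueness is automatic.

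The crux, and the only place the distribution of quadratic residues truly enters, is showing $J_p\neq\emptyset$ for $p\ge 13$, i.e. that some even $l$ gives a walk that is never absorbed; this is the genuine non-integrality assertion, and it fails for small primes (for $p=5$ every even walk is absorbed), so any proof must use a feature of the residue pattern that switches on at $p\ge 13$. Here my plan is to analyse the walk near the absorbing boundary: escape from absorption requires that at every visit to $c=1$ the current time $n$ be a quadratic residue, so that $\left(\frac{n}{p}\right)=+1$ bounces the walk up rather than into $0$, with the mirror condition at $c=p-1$. I would then either exhibit an explicit even starting value whose trajectory can be forced to meet the boundary only at such bouncing times via the local distribution of consecutive quadratic residues, or bound $\lvert I_p^{\mathrm L}\rvert$ and $\lvert I_p^{\mathrm R}\rvert$ from above (through a character-sum or lattice-point estimate for the region swept out by absorbed trajectories) and check that their sum is smaller than the number $\tfrac{p+1}{2}$ of even classes once $p\ge 13$. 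I expect this existence step to be the main obstacle, as it is exactly the arithmetic input that cannot come from the soft parity and monotonicity arguments used everywhere else.
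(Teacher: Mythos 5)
Your reduction to a unit-step walk, and everything you derive from it --- part (1), the odd-$l$ half of part (2), and the interval decomposition $I_p^{\mathrm{L}}\sqcup J_p\sqcup I_p^{\mathrm{R}}$ --- is correct and is essentially the paper's own argument in different clothing: your $c_n$ is the paper's $\tilde{g}_l(n)$ (\cref{lem:trans_to_tilde}), your invariant diagonal and anti-diagonal with the non-crossing/parity observations are \cref{lem:diagonal_stable}, \cref{prop:diag}, and \cref{prop:anti_diag}, and your down-set/up-set argument corresponds to \cref{prop:even_ineq}. (One cosmetic caveat: working modulo $p$ with the single absorbing state $0$ makes a walk that wraps ``from above'' drop below all unabsorbed walks, so the linear ordering you invoke is cleaner if you lift to $\{0,\dots,p\}$ with two absorbing states $0$ and $p$, as the paper does.)

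The genuine gap is the one you flag yourself: you never prove $J_p\neq\emptyset$, i.e.\ $l_{\mathrm{L}}<l_{\mathrm{R}}$, for $p\geq 13$ --- yet this strict inequality is part of the statement and is the entire content needed for \cref{cor:main_application}. What you have proved is only the weaker version in which $J_p$ may be empty. Neither of your two proposed strategies is close to closing this. A counting bound on $\#I_p^{\mathrm{L}}+\#I_p^{\mathrm{R}}$ via character sums is implausible at the stated threshold: absorption is decided by the entire recursively-defined trajectory rather than by a tractable character sum, such estimates could at best work for large $p$ rather than exactly at $p\geq 13$, and indeed the authors cannot even establish the asymptotics of $\#J_p$ (it remains \cref{conj:Num_Jp_over_p_half}). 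The paper's actual mechanism is different in kind: it is a rigidity argument, not a counting one. Via a reflected ``barrier'' built from the trajectory of $l_{\mathrm{L}}-2$, it shows (\cref{thm:empty_iff}) that $l_{\mathrm{L}}=l_{\mathrm{R}}$ forces two exact identities on Legendre symbols, $\left(\frac{n}{p}\right)=-\left(\frac{l+1-n}{p}\right)$ and $\left(\frac{n}{p}\right)=\left(\frac{l+1+n}{p}\right)$ on suitable ranges; then an arithmetic-billiards argument (\cref{lem:billiards}, \cref{prop:unique_sequence_a}) shows these symmetries pin down a unique $\pm 1$ sequence $(a_{p,l}(n))$; finally a case analysis on $p\bmod 8$ shows this sequence is incompatible with the multiplicativity of the Legendre symbol together with the second supplement to quadratic reciprocity. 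This conversion of ``$J_p=\emptyset$'' into exact symmetries that contradict multiplicativity is the missing idea; without it, or a genuine substitute for it, the theorem as stated is not proved.
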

We prove this theorem through elementary geometric arguments based on the distribution of quadratic residues modulo $p$.

As an application of our result, we obtain the following corollary, which provides a partial answer to the question posed by Gima, Matsusaka, Miyazaki, and Yara.
\begin{corollary}\label{cor:main_application}
Let $k$ be a positive even integer.
Assume that there exists a prime $p\geq 13$ such that $p\equiv 1\pmod{4}$ and $k$ is a positive odd multiple of $(p-1)/2$.
Then there exists an even integer $l$ satisfying $2\leq l\leq p-3$ such that for any positive integer $l'$ with $l'\equiv l\pmod{p}$, we have $g_{k,l'}(p)\not\in\Z_{(p)}$, and hence $N_{k,l'}\leq p$.
\end{corollary}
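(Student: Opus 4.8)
The plan is to reduce the corollary to part (2) of \cref{thm:main3} by exploiting the fact that the residue class $pg_{k,l}(p)\bmod p$ — and hence the membership $g_{k,l}(p)\in\Z_{(p)}$ — depends only on $k\bmod(p-1)$ and $l\bmod p$. First I would record the relevant congruence class of the exponent: writing $k=m\cdot(p-1)/2$ with $m$ a positive odd integer, say $m=2t+1$, we get $k=t(p-1)+(p-1)/2$, so that
\[
k\equiv\frac{p-1}{2}\pmod{p-1}.
\]
Note that $(p-1)/2$ is even because $p\equiv1\pmod4$, so the evenness of $k$ is automatic.

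Next I would justify the claimed periodicity. For any integer initial value $l$, the recurrence \eqref{eq:rec_of_(k,l)-Goebel} computes $g_{k,l}(2),\dots,g_{k,l}(p-1)$ from $g_{k,l}(1)=l$ using only the denominators $2,3,\dots,p-1$, all of which are units in $\Z_{(p)}$; hence these terms lie in $\Z_{(p)}$ and their reductions modulo $p$ are determined by $l\bmod p$ together with the map $g\mapsto g^{k-1}$ on $\mathbb{F}_p$. By Fermat's little theorem this map depends only on $k\bmod(p-1)$: for $g\not\equiv0$ one has $g^{k-1}\equiv g^{k'-1}$ whenever $k\equiv k'\pmod{p-1}$, while for $g\equiv0$ both sides vanish. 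Consequently the reduction of $pg_{k,l}(p)=g_{k,l}(p-1)\bigl((p-1)+g_{k,l}(p-1)^{k-1}\bigr)$ modulo $p$, and therefore whether $g_{k,l}(p)\in\Z_{(p)}$, depends only on $k\bmod(p-1)$ and $l\bmod p$. Combined with the previous step this yields, for every integer $l$,
\[
g_{k,l}(p)\in\Z_{(p)}\iff g_{\frac{p-1}{2},l}(p)\in\Z_{(p)}.
\]

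With this equivalence in hand, I would invoke \cref{thm:main3}(2). Since $p\equiv1\pmod4$ and $p\geq13$, the decomposition $\{0,2,\dots,p-1\}=I_p^{\mathrm L}\sqcup J_p\sqcup I_p^{\mathrm R}$ exists, and $J_p$ is nonempty because $l_{\mathrm L}<l_{\mathrm R}$ forces $l_{\mathrm L}\in J_p$. Choosing any $l\in J_p$, the theorem gives $g_{\frac{p-1}{2},l}(p)\notin\Z_{(p)}$, hence $g_{k,l}(p)\notin\Z_{(p)}$. It then remains to check the range $2\leq l\leq p-3$: the lower bound follows from $l\geq l_{\mathrm L}\geq2$, and for the upper bound note that $l$ and $l_{\mathrm R}$ are both even with $l<l_{\mathrm R}\leq p-1$, so $l\leq l_{\mathrm R}-2\leq p-3$.

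Finally, for any positive integer $l'$ with $l'\equiv l\pmod p$, the periodicity established in the second paragraph gives $g_{k,l'}(p)\in\Z_{(p)}\iff g_{k,l}(p)\in\Z_{(p)}$, so $g_{k,l'}(p)\notin\Z_{(p)}$; since $\Z\subseteq\Z_{(p)}$ this forces $g_{k,l'}(p)\notin\Z$ and hence $N_{k,l'}\leq p$. I expect the only genuinely delicate point to be the periodicity argument: one must verify that $\Z_{(p)}$-membership is truly a function of $k\bmod(p-1)$, which requires Fermat's little theorem together with the separate (but harmless) treatment of intermediate terms $g_{k,l}(n)\equiv0\pmod p$. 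Everything else is bookkeeping about parities and the nonemptiness of $J_p$.
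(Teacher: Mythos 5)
Your proof is correct and follows essentially the same route as the paper's: take $l\in J_p$ (nonempty since $l_{\mathrm{L}}<l_{\mathrm{R}}$), apply \cref{thm:main3}(2), and transfer the non-integrality from $(p-1)/2$ to $k$ and from $l$ to $l'$ by periodicity in $k\bmod(p-1)$ and $l\bmod p$. The only difference is that you spell out in full the Fermat's-little-theorem periodicity argument (including the treatment of terms $\equiv 0\pmod p$ and the range check $2\leq l\leq p-3$), which the paper compresses into the phrase ``by periodicity.''
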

\begin{proof}
Since $l_{\mathrm{L}} < l_{\mathrm{R}}$ in \cref{thm:main3}, it follows that $J_p\neq\varnothing$.
Therefore, for $l\in J_p$, we have $g_{\frac{p-1}{2},l}(p)\not\in\Z_{(p)}$.
By periodicity, we have the conclusion.
\end{proof}
We prepare a lemma regarding the strength of this corollary.
\begin{lemma}\label{lem:strongth}
The union
\[
\bigcup_{\substack{p\equiv 1\pmod{4}, \\ p\geq 13}}\frac{p-1}{2}\cdot(2\Z_{\geq 0}+1)
\]
cannot be covered by finitely many of the sets $\frac{p-1}{2}\cdot(2\Z_{\geq 0}+1)=\{\frac{p-1}{2}\cdot (2m+1) \mid m\in\Z_{\geq 0}\}$.
In the union above, $p$ varies over primes satisfying $p \equiv 1 \pmod{4}$ and $p\geq 13$.
\end{lemma}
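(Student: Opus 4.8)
The plan is to argue by contradiction. Write $S_p \coloneqq \frac{p-1}{2}\cdot(2\Z_{\geq 0}+1)$ for the set of positive odd multiples of $\frac{p-1}{2}$, and suppose the union were covered by finitely many of these sets, say by $S_{p_1},\dots,S_{p_n}$ with each $p_i$ a prime satisfying $p_i\equiv 1\pmod 4$ and $p_i\geq 13$. Since each $S_{p_i}$ already belongs to the union, this amounts to $\bigcup_p S_p = \bigcup_{i=1}^n S_{p_i}$. To derive a contradiction I would exhibit a single element of the union lying in none of the $S_{p_i}$; the most economical candidate is the smallest element $\frac{q-1}{2}$ of $S_q$ for a suitably chosen prime $q\equiv 1\pmod 4$ with $q\geq 13$.

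The first step is to record exactly when $\frac{q-1}{2}\in S_{p_i}$. By definition this holds precisely when $\frac{q-1}{2}=\frac{p_i-1}{2}(2m+1)$ for some $m\geq 0$, i.e.\ when $(p_i-1)\mid(q-1)$ and the quotient $\frac{q-1}{p_i-1}$ is odd. Letting $v_2$ denote the $2$-adic valuation, the oddness of this quotient forces $v_2(q-1)=v_2(p_i-1)$. Consequently, setting $M\coloneqq\max_{1\leq i\leq n}v_2(p_i-1)$ (so $M\geq 2$, as each $p_i\equiv 1\pmod 4$), a clean sufficient condition for $\frac{q-1}{2}$ to avoid every $S_{p_i}$ at once is $v_2(q-1)>M$: were $\frac{q-1}{2}$ an odd multiple of $\frac{p_i-1}{2}$, comparing $2$-adic valuations would give $v_2(q-1)=v_2(p_i-1)\leq M$, a contradiction. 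This inequality also forces $q\neq p_i$, so $q$ is a genuinely new prime.

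The remaining, and only substantial, step is to produce such a prime. I would invoke Dirichlet's theorem on primes in arithmetic progressions applied to the progression $1\pmod{2^{M+1}}$: it supplies infinitely many primes $q\equiv 1\pmod{2^{M+1}}$, each of which satisfies $v_2(q-1)\geq M+1>M$ and, since $M+1\geq 3$, also $q\equiv 1\pmod 4$. Choosing any such $q$ with $q\geq 13$ then gives $\frac{q-1}{2}\in S_q\subseteq\bigcup_p S_p$ while $\frac{q-1}{2}\notin S_{p_i}$ for every $i$, the desired contradiction. I expect the one genuine ingredient to be this appeal to Dirichlet's theorem guaranteeing a prime $q$ with $v_2(q-1)$ arbitrarily large; everything else is elementary bookkeeping with the $2$-adic valuation. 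It is worth remarking that the argument never uses $p_i\equiv 1\pmod 4$ for the covering sets, so (after enlarging the modulus if necessary to keep $q\equiv 1\pmod 4$) it equally shows that the union cannot be covered by finitely many sets $\frac{p-1}{2}\cdot(2\Z_{\geq 0}+1)$ with $p$ ranging over arbitrary primes.
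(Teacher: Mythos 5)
Your proof is correct, but it runs on a different mechanism than the paper's. Both arguments proceed by contradiction and both invoke Dirichlet's theorem to produce a witness prime $q$ whose smallest element $\frac{q-1}{2}$ escapes every covering set, but the escape is engineered differently. You exploit the \emph{oddness} constraint built into the sets: membership of $\frac{q-1}{2}$ in $\frac{p_i-1}{2}\cdot(2\Z_{\geq 0}+1)$ forces $v_2(q-1)=v_2(p_i-1)$, so it suffices to take $q\equiv 1\pmod{2^{M+1}}$ with $M=\max_i v_2(p_i-1)$, making the $2$-adic valuation of $q-1$ too large; the odd parts of the $p_i-1$ play no role at all. The paper instead arranges the stronger conclusion that $\frac{q-1}{2}$ is not even a \emph{multiple} of any $\frac{p_i-1}{2}$: it fixes $v_2(q-1)=2$ (via $q=8sn+4b+1$ with $b$ odd) and simultaneously makes the odd part of $\frac{q-1}{2}$ coprime to every odd prime factor of the $(p_i-1)/2$, using a CRT-style choice $2b\equiv -1\pmod{s}$; this is why the hypothesis $p_i\geq 13$ (excluding $p_i=5$) matters there. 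Your route buys simpler bookkeeping --- only the progression $1\pmod{2^{M+1}}$ and a valuation comparison --- and, as you note, it generalizes verbatim to covering sets indexed by arbitrary primes; the paper's route buys the stronger non-divisibility statement for the witness element, at the cost of the coprimality construction. Both are complete proofs.
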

\begin{proof}
Assume that there exist finitely many primes $p_1, \dots, p_t$ such that 
\[
\bigcup_{\substack{p\equiv 1\pmod{4}, \\ p\geq 13}}\frac{p-1}{2}\cdot(2\Z_{\geq 0}+1)=\bigcup_{1\leq i\leq t}\frac{p_i-1}{2}\cdot(2\Z_{\geq 0}+1)
\]
holds.
Let $s$ be the product of odd prime factors of $(p_i - 1)/2$ for all $1 \leq i \leq t$.
Take a positive odd integer $b$ such that $2b\equiv-1\pmod{s}$.
Let $q$ be a prime of the form $q=8sn+4b+1$ for some positive integer $n$.
Such a prime exists by Dirichlet's theorem on arithmetic progressions.
Since $(q-1)/2=2(2sn+b)$ is not a multiple of $(p_i-1)/2$ for any $1\leq i\leq t$, we have a contradiction.
\end{proof}
By \cref{lem:strongth}, the set of values of $k$ to which \cref{cor:main_application} can be applied is broader than the set that can be covered by finite computer calculations with previously proposed algorithms checking whether $g_{k,l}(p)\not\in\Z_{(p)}$.

We present the numerical data for $\#J_p$, $l_{\mathrm{L}}$, and $l_{\mathrm{R}}$ in \cref{subsec:comput_results:Jp_data}.
Here, for a set $A$, the notation $\#A$ denotes the cardinality of $A$.
These data (especially Figure~\ref{fig:J_p}) support the following conjecture:
\begin{conjecture}\label{conj:Num_Jp_over_p_half}
For primes $p\geq 13$ satisfying $p\equiv 1\pmod{4}$, let $J_p$ be the set defined as in $\cref{thm:main3}$.
The ratio $\#J_p/p$, which is clearly less than $1/2$ by definition, tends to $1/2$ as $p$ to infinity, that is,
\[
\lim_{p\to\infty}\frac{\#J_p}{p}=\frac{1}{2},
\]
where $p$ varies over primes satisfying $p \equiv 1 \pmod{4}$.
\end{conjecture}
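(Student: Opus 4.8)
The plan is to first translate the integrality of $g_{\frac{p-1}{2},l}(p)$ into the terminal behaviour of an explicit $\pm1$-walk, and then to reduce the density statement to two endpoint estimates. Writing $\chi$ for the Legendre symbol modulo $p$ (with $\chi(0)=0$) and taking $k=\frac{p-1}{2}$, one has $x^{k-1}=x^{(p-3)/2}=\chi(x)\,x^{-1}$ for $x\in\mathbb F_p^{\times}$, so rewriting \eqref{eq:rec_of_(k,l)-Goebel} in $\Z_{(p)}$ and setting $y_n\coloneqq n\,g_{\frac{p-1}{2},l}(n)\bmod p$ turns the recurrence into the additive walk
\[
y_1=l,\qquad y_{n+1}=y_n+\chi(y_n)\chi(n)\quad(1\le n\le p-2),
\]
on which $0$ is absorbing. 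Carrying out the final division by $p$ shows that $g_{\frac{p-1}{2},l}(p)\in\Z_{(p)}$ if and only if $y_{p-1}\in\{0,p-1\}$, i.e.\ the walk is absorbed at $0$ before time $p-1$ or lands on $p-1$ at the last step. (This is exactly the mechanism behind \cref{thm:main3}; one checks it against $p=13$, where the even integral starting values are $\{0,2,10,12\}$ and $J_{13}=\{4,6,8\}$.)

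Second, I would reduce \cref{conj:Num_Jp_over_p_half} to a statement about the endpoints $l_{\mathrm L},l_{\mathrm R}$ alone. Since $\#\{l\in2\Z:0\le l<p\}=\frac{p+1}{2}$ and $J_p=\{l\in2\Z:l_{\mathrm L}\le l<l_{\mathrm R}\}$ by \cref{thm:main3}, we have $\#J_p=(l_{\mathrm R}-l_{\mathrm L})/2$, hence
\[
\frac{\#J_p}{p}=\frac{l_{\mathrm R}-l_{\mathrm L}}{2p}\longrightarrow\frac12\iff l_{\mathrm L}=o(p)\ \text{and}\ p-l_{\mathrm R}=o(p).
\]
Because $J_p$ is an interval of even integers, $l_{\mathrm L}=o(p)$ is equivalent to the assertion that for every $\varepsilon>0$ and all large $p$ there is a \emph{non}-integral even $l\in(0,\varepsilon p)$, and $p-l_{\mathrm R}=o(p)$ to the existence of a non-integral even $l\in((1-\varepsilon)p,p)$. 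Thus it suffices to exhibit, within $o(p)$ of $l=0$ and within $o(p)$ of $l=p-1$ respectively, a single even starting value whose walk ends outside $\{0,p-1\}$.

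Third comes the analytic heart: showing that the set of even $l\in(0,\varepsilon p)$ whose walk ends in $\{0,p-1\}$ is a proper subset (and likewise near $p-1$). Since $0$ is absorbing and can only be entered from $\pm1$, such an $l$ must reach $\{1,p-1\}$ at a step $n$ of the appropriate residue type, or else land on $p-1$ exactly at $n=p-1$; the idea is to show that once the walk leaves a bounded neighbourhood of the bottom the increments $\chi(y_n)\chi(n)$ carry it into the bulk and keep it there, so that it fails to return to $\{1,p-1\}$ at an absorbing time. Quantitatively I would model $n\mapsto\chi(y_n)$ along the trajectory and invoke equidistribution of quadratic residues --- P\'olya--Vinogradov, the Weil bound for $\sum_n\chi(f(n))$, or Burgess in short ranges --- to bound the number of small even $l$ whose walk ever meets $\{1,p-1\}$, aiming for $<\varepsilon p/2$ (which already yields the limit $\tfrac12$) and ideally $o(p)$. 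The endpoint near $p-1$ is treated symmetrically, using $\chi(-1)=1$ for $p\equiv1\pmod4$.

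The hard part --- and almost certainly why this remains a conjecture --- is precisely this last step. The increment $\chi(y_n)\chi(n)$ depends on the walk's own history through $\chi(y_n)$, so the argument of $\chi$ is not a fixed low-degree polynomial in $n$ and the standard character-sum estimates do not apply directly; there is no independence to exploit, and a priori the orbit could be trapped in long low-lying excursions correlated with the quadratic-residue pattern. Controlling Legendre-symbol correlations along a self-generated trajectory is the genuine obstacle, and a realistic intermediate target is the weaker assertion ``not all small even $l$ are absorbed'' (sufficient for the limit $\tfrac12$) via a second-moment or averaging argument over the $\Theta(\varepsilon p)$ small starting values, rather than a trajectory-by-trajectory analysis.
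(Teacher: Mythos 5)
The statement you are attempting is a \emph{conjecture} in the paper, not a theorem: the authors prove the interval structure of $J_p$ and its non-emptiness (\cref{thm:main3}), but for the density claim $\#J_p/p\to 1/2$ they offer only numerical evidence for $p$ up to $10^6$ (\cref{subsec:comput_results:Jp_data}, \cref{fig:J_p}); there is no proof in the paper to compare against. Your preparatory reductions are correct and in fact coincide with the paper's own machinery: your walk $y_{n+1}=y_n+\chi(y_n)\chi(n)$ is exactly the sequence $\tilde{g}_l$ of the paper (\cref{lem:trans_to_tilde}), and the identity $\#J_p=(l_{\mathrm{R}}-l_{\mathrm{L}})/2$ together with the equivalence
\[
\frac{\#J_p}{p}\longrightarrow\frac12
\iff
l_{\mathrm{L}}=o(p)\ \text{and}\ p-l_{\mathrm{R}}=o(p)
\]
is sound, since $J_p$ is an interval of even integers by \cref{thm:main3}; the paper uses the same counting formula in \cref{subsec:algorithms}. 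Likewise, your observation that it suffices to exhibit one non-integral even $l$ within $o(p)$ of each endpoint is a correct consequence of the interval structure.

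The genuine gap is your third step, which is where the entire content of \cref{conj:Num_Jp_over_p_half} resides, and there you provide a strategy rather than an argument. The tools you name (P\'olya--Vinogradov, Weil, Burgess) estimate sums $\sum_n \chi(f(n))$ for a \emph{fixed} polynomial or rational function $f$, whereas here the argument of the Legendre symbol is $y_n$, a quantity determined by the walk's own history; the increments $\chi(y_n)\chi(n)$ are self-referentially correlated with the quadratic-residue pattern, and no known character-sum technique controls such trajectories. You acknowledge this yourself, and the proposed fallback (a second-moment or averaging argument over the $\Theta(\varepsilon p)$ small starting values) is stated as a target, not carried out --- note that even an averaging argument must contend with the fact that walks started at nearby even $l$ coalesce once they meet, by the argument of \cref{prop:even_ineq}, so their fates are far from independent. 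What you have is a faithful reformulation of the conjecture in the paper's own language plus an honest identification of the obstruction; it is not a proof, and the obstruction you identify is precisely why the statement remains open.
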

This article is organized as follows.
In \cref{sec:Line_chart}, we prove a relaxed version of \cref{thm:main3} that allows the possibility of $l_{\mathrm{L}}=l_{\mathrm{R}}$ and provide a criterion for when $l_{\mathrm{L}}<l_{\mathrm{R}}$ holds (\cref{thm:empty_iff}).
In \cref{sec:special_sequences}, we investigate fundamental properties of two kinds of special sequences whose values are in $\{+1, -1\}$.
In \cref{sec:proof}, we prove $l_{\mathrm{L}}<l_{\mathrm{R}}$ by using results obtained in \cref{sec:special_sequences}.
In \cref{sec:computationla_results}, we include the data obtained from our numerical experiments, with pseudocode for the programs used in those experiments.

\section*{Acknowledgments}
The authors would like to express their sincere gratitude to Dr.~Junnosuke Koizumi, Professor Toshiki Matsusaka, Professor Stan Wagon, and an anonymous referee for their helpful comments.

% --------------------------------------------------------------------------

% --------------------------------------------------------------------------
\section{Geometric arguments involving Legendre symbols}\label{sec:Line_chart}
Throughout this section, we fix an odd prime $p$, and the symbol $\left(\frac{\cdot}{p}\right)$ denotes the Legendre symbol.
\begin{definition}\label{def:}
Let $l$ be an integer such that $0\leq l\leq p-1$.
We define a sequence $(\tilde{g}_l(n))_{1\leq n\leq p}$ with $\tilde{g}_l(1)=l$ and the recurrence relation:
\begin{align*}
\tilde{g}_l(n+1)\coloneqq\begin{cases}\tilde{g}_l(n)+\left(\frac{n}{p}\right)\left(\frac{\tilde{g}_l(n)}{p}\right) & \text{ if } 0<\tilde{g}_l(n)<p, \\ 0 & \text{ if } \tilde{g}_l(n)=0, \\ p & \text{ if } \tilde{g}_l(n)=p.\end{cases}
\end{align*}
\end{definition}

\begin{lemma}\label{lem:trans_to_tilde}
For any integer $n$ with $1 \leq n \leq p$, the following congruence holds in $\Z_{(p)}$$:$
\[
ng_{\frac{p-1}{2},l}(n)\equiv \tilde{g}_l(n)\pmod{p}.
\]
In particular, $g_{\frac{p-1}{2},l}(p)\in\Z_{(p)}$ if and only if $\tilde{g}_l(p)=0$ or $p$.
\end{lemma}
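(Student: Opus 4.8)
The plan is to run a single induction on $n$ after rewriting the recurrence in a form adapted to the quantity we care about. Set $k=(p-1)/2$ and abbreviate $a_n\coloneqq n\,g_{\frac{p-1}{2},l}(n)$, so that the assertion becomes $a_n\equiv\tilde g_l(n)\pmod p$. The point is that the relation \eqref{eq:rec_of_(k,l)-Goebel} telescopes nicely in this variable: since $(n+1)g_{\frac{p-1}{2},l}(n+1)=g_{\frac{p-1}{2},l}(n)\bigl(n+g_{\frac{p-1}{2},l}(n)^{k-1}\bigr)=n\,g_{\frac{p-1}{2},l}(n)+g_{\frac{p-1}{2},l}(n)^{k}$, we get the clean additive form $a_{n+1}=a_n+g_{\frac{p-1}{2},l}(n)^{k}$. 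The base case is immediate, as $a_1=g_{\frac{p-1}{2},l}(1)=l=\tilde g_l(1)$. As part of the induction I would also carry along the statement $a_n\in\Z_{(p)}$ (which is implicit in the congruence being asserted over $\Z_{(p)}$): for $1\leq n\leq p-1$ the integer $n$ is a unit in $\Z_{(p)}$, so $g_{\frac{p-1}{2},l}(n)=a_n/n\in\Z_{(p)}$ whenever $a_n\in\Z_{(p)}$, whence $a_{n+1}=a_n+g_{\frac{p-1}{2},l}(n)^k\in\Z_{(p)}$.

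The core of the step is to evaluate $g_{\frac{p-1}{2},l}(n)^{k}=g_{\frac{p-1}{2},l}(n)^{(p-1)/2}$ modulo $p$, which I would do by reducing into $\Z_{(p)}/p\Z_{(p)}\cong\mathbb{F}_p$ and applying Euler's criterion to the $p$-adic unit $g_{\frac{p-1}{2},l}(n)$. In the generic regime $0<\tilde g_l(n)<p$ the induction hypothesis gives $g_{\frac{p-1}{2},l}(n)\equiv a_n/n\equiv \tilde g_l(n)\,n^{-1}\not\equiv 0\pmod p$, so Euler's criterion yields $g_{\frac{p-1}{2},l}(n)^{(p-1)/2}\equiv\left(\frac{g_{\frac{p-1}{2},l}(n)}{p}\right)\pmod p$. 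Multiplicativity of the Legendre symbol together with $\left(\frac{n}{p}\right)^{-1}=\left(\frac{n}{p}\right)$ then gives $\left(\frac{g_{\frac{p-1}{2},l}(n)}{p}\right)=\left(\frac{n}{p}\right)\left(\frac{\tilde g_l(n)}{p}\right)$, which is exactly the increment in the first branch of the definition of $\tilde g_l$; hence $a_{n+1}\equiv\tilde g_l(n)+\left(\frac{n}{p}\right)\left(\frac{\tilde g_l(n)}{p}\right)=\tilde g_l(n+1)\pmod p$.

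It remains to handle the absorbing case $\tilde g_l(n)\in\{0,p\}$, i.e.\ $a_n\equiv 0\pmod p$. Then $g_{\frac{p-1}{2},l}(n)=a_n/n\equiv 0\pmod p$, so $g_{\frac{p-1}{2},l}(n)^{k}\equiv 0$ and $a_{n+1}\equiv a_n\equiv 0\pmod p$, while $\tilde g_l(n+1)=\tilde g_l(n)\in\{0,p\}$ is likewise $\equiv 0$; thus the congruence persists and the induction closes over the whole range $1\leq n\leq p$. I expect the subtle points to be precisely these two bookkeeping issues rather than any deep idea: justifying Euler's criterion for a value of $g_{\frac{p-1}{2},l}(n)$ that need not be an integer (handled by passing to $\mathbb{F}_p$), and keeping track that $0$ and $p$ are indistinguishable modulo $p$ yet must be recorded separately so that the $\tilde g_l$ dynamics make sense.

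For the final ``in particular'' claim I would translate the $n=p$ congruence into a statement about $p$-adic valuations. At $n=p$ it reads $p\,g_{\frac{p-1}{2},l}(p)=a_p\equiv\tilde g_l(p)\pmod p$ with $a_p\in\Z_{(p)}$, and since $\tilde g_l(p)$ is an integer in $[0,p]$ it is divisible by $p$ exactly when it equals $0$ or $p$; hence $a_p\equiv 0\pmod p$ if and only if $\tilde g_l(p)\in\{0,p\}$. On the other hand $v_p(a_p)=1+v_p\!\left(g_{\frac{p-1}{2},l}(p)\right)$, so $a_p\in p\Z_{(p)}$ if and only if $v_p\!\left(g_{\frac{p-1}{2},l}(p)\right)\geq 0$, i.e.\ if and only if $g_{\frac{p-1}{2},l}(p)\in\Z_{(p)}$. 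Chaining these two equivalences gives the stated criterion, and this valuation translation is the one place where the distinction between ``$a_p\equiv0$'' and genuine integrality of $g_{\frac{p-1}{2},l}(p)$ must be argued rather than assumed.
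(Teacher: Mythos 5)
Your proposal is correct and follows essentially the same route as the paper: an induction on $n$ using the rewritten recurrence $(n+1)g_{\frac{p-1}{2},l}(n+1)=ng_{\frac{p-1}{2},l}(n)+g_{\frac{p-1}{2},l}(n)^{\frac{p-1}{2}}$, Euler's criterion applied after reducing the $p$-adic unit modulo $p$, the identity $\left(\frac{n}{p}\right)^{-1}=\left(\frac{n}{p}\right)$, and the observation that the residue classes $0$ and $p$ are absorbing. Your extra bookkeeping (carrying $ng_{\frac{p-1}{2},l}(n)\in\Z_{(p)}$ through the induction and the valuation argument for the ``in particular'' claim) only makes explicit what the paper leaves implicit.
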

\begin{proof}
By the recurrence relation \eqref{eq:rec_of_(k,l)-Goebel}, we see that if $g_{\frac{p-1}{2},l}(n)\equiv 0\pmod{p}$, then $(n+1)g_{\frac{p-1}{2},l}(n+1)\equiv 0\pmod{p}$ also holds.
We assume that $ng_{\frac{p-1}{2},l}(n)$ is prime to $p$.
Since
\[
(n+1)g_{\frac{p-1}{2},l}(n+1)=ng_{\frac{p-1}{2},l}(n)+n^{-\frac{p-1}{2}}(ng_{\frac{p-1}{2},l}(n))^{\frac{p-1}{2}}
\]
holds by \eqref{eq:rec_of_(k,l)-Goebel}, we have
\[
(n+1)g_{\frac{p-1}{2},l}(n+1)\equiv ng_{\frac{p-1}{2},l}(n)+\left(\frac{n}{p}\right)^{-1}\left(\frac{ng_{\frac{p-1}{2},l}(n)}{p}\right) \pmod{p}
\]
by Euler's criterion.
Hence, we obtain the conclusion from $\left(\frac{n}{p}\right)^{-1}=\left(\frac{n}{p}\right)$.
\end{proof}

\begin{definition}\label{def:}
We define a set $\mathcal{L}$ of point sets as
\[
\mathcal{L}\coloneqq\{\{(n,a(n))\}_{1\leq n\leq p} \mid a(n)\in\Z, 0\leq a(n)\leq p \text{ for } 1\leq n\leq p\}.
\]
Here, $\{(n,a(n))\}_{1\leq n\leq p}$ is an abbreviation of the set $\{(n,a(n)) \mid 1\leq n\leq p\}$.
For $\mathsf{A}=\{(n,a(n))\}_{1\leq n\leq p}$ and $\mathsf{B}=\{(n,b(n))\}_{1\leq n\leq p}$ in $\mathcal{L}$, we use the following notation:
\begin{itemize}
\item $\mathsf{A}\leq \mathsf{B} \overset{\text{def}}\Longleftrightarrow a(n)\leq b(n)$ for all $1\leq n\leq p$,
\item $\mathsf{A}\preceq\mathsf{B} \overset{\text{def}}\Longleftrightarrow$ there exists an integer $m$ such that $0 \leq m \leq p$, where $a(n) < b(n)$ holds for any $1 \leq n \leq m$, and $a(n) = b(n)$ holds for $m < n \leq p$.
\end{itemize}
We also introduce symbols $\mathsf{G}_l$ for $0\leq l\leq p-1$, $\mathsf{diag}$, and $\overline{\mathsf{diag}}$ as elements of $\mathcal{L}$, defined respectively as follows:
\[
\mathsf{G}_l\coloneqq \{(n,\tilde{g}_l(n))\}_{1\leq n\leq p},\quad \mathsf{diag}\coloneqq\{(n,n)\}_{1\leq n\leq p},\quad \overline{\mathsf{diag}}\coloneqq\{(n,p-n)\}_{1\leq n\leq p}.
\]
\end{definition}

\begin{lemma}\label{lem:diagonal_stable}
Let $m$ be an integer such that $1\leq m\leq p$.
If $\tilde{g}_l(m)=m$, then for any $n$ such that $m\leq n\leq p$, we have $\tilde{g}_l(n)=n$.
\end{lemma}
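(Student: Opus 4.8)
The plan is to reduce the statement to a single-step stability claim and then iterate it. Concretely, I would prove by induction on $n$ that $\tilde{g}_l(n)=n$ for all $m\le n\le p$, taking the base case $n=m$ to be the hypothesis. For the inductive step it suffices to establish the following one-step claim: if $\tilde{g}_l(n)=n$ for some $n$ with $1\le n\le p-1$, then $\tilde{g}_l(n+1)=n+1$.

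To prove this one-step claim, first observe that $\tilde{g}_l(n)=n$ together with $1\le n\le p-1$ forces $0<\tilde{g}_l(n)<p$, so the recurrence falls into its first branch:
\[
\tilde{g}_l(n+1)=\tilde{g}_l(n)+\left(\frac{n}{p}\right)\left(\frac{\tilde{g}_l(n)}{p}\right)=n+\left(\frac{n}{p}\right)\left(\frac{n}{p}\right).
\]
The key observation is then that $\left(\frac{n}{p}\right)\left(\frac{n}{p}\right)=\left(\frac{n}{p}\right)^2=1$, since $n$ is coprime to $p$ (because $1\le n\le p-1$) and hence $\left(\frac{n}{p}\right)=\pm1$. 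This gives $\tilde{g}_l(n+1)=n+1$, completing the step.

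Iterating from $n=m$ up to $n=p-1$ then yields $\tilde{g}_l(n)=n$ for every $m\le n\le p$; in the degenerate case $m=p$ there is nothing to prove. I do not expect any genuine obstacle here, as the argument is a short induction. The only points that warrant a moment's care are verifying that the value $\tilde{g}_l(n)=n$ always lands strictly between $0$ and $p$ throughout the range $m\le n\le p-1$, so that the first branch of the recurrence is the one that applies at each step, and confirming that the self-paired Legendre symbol contributes exactly $+1$ (and never $-1$ or $0$), which is precisely what keeps the sequence pinned to the diagonal.
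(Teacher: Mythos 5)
Your proof is correct and is essentially the same as the paper's: both rest on the single-step computation $\tilde{g}_l(n+1)=n+\left(\frac{n}{p}\right)^2=n+1$, valid because $1\leq n\leq p-1$ forces the first branch of the recurrence and makes the Legendre symbol $\pm 1$, and then iterate. Your write-up merely makes explicit the induction and the branch-checking that the paper's terse ``this process can be repeated'' leaves implicit.
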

\begin{proof}
If $\tilde{g}_l(m)=m$ for $m<p$, then
\[
\tilde{g}_l(m+1)=m+\left(\frac{m}{p}\right)^2=m+1.
\]
This process can be repeated.
\end{proof}

\begin{proposition}\label{prop:diag}
For any odd integer $l$ such that $1\leq l<p$, we have $\mathsf{diag}\preceq \mathsf{G}_l$.
\end{proposition}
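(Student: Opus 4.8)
The plan is to track the signed vertical distance from the diagonal, $d(n)\coloneqq \tilde{g}_l(n)-n$, and to show that it is nonnegative throughout, strictly positive on an initial segment, and zero thereafter; this is exactly the content of $\mathsf{diag}\preceq\mathsf{G}_l$.

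First I would record the parity input that makes odd $l$ special: since $\tilde{g}_l(1)=l$ is odd, $d(1)=l-1$ is even and nonnegative. Next I would analyze how $d$ evolves. As long as $0<\tilde{g}_l(n)<p$, neither $n$ nor $\tilde{g}_l(n)$ is divisible by $p$, so the increment $\left(\frac{n}{p}\right)\left(\frac{\tilde{g}_l(n)}{p}\right)$ equals $\pm 1$, whence $d(n+1)=d(n)+\left(\frac{n}{p}\right)\left(\frac{\tilde{g}_l(n)}{p}\right)-1\in\{d(n),\,d(n)-2\}$. Thus in the interior $d$ is non-increasing and retains its parity, so it stays even.

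The heart of the argument is to rule out $d$ becoming negative. Because $d$ is even and can only decrease in steps of $2$, the only way to reach a negative value is to pass through $d(n)=0$, i.e.\ $\tilde{g}_l(n)=n$; but then \cref{lem:diagonal_stable} forces $\tilde{g}_l(n')=n'$ for all $n'\geq n$, so $d$ is pinned at $0$ and never goes negative. I would also check the two boundary cases of the recurrence: the value $\tilde{g}_l(n)=0$ would mean $d(n)=-n<0$ and so cannot occur, while if the sequence reaches $\tilde{g}_l(n)=p$ it freezes there and $d(n)=p-n\geq 0$ remains valid up to $n=p$. A short induction on $n$ combining these observations then yields $d(n)\geq 0$ for all $1\leq n\leq p$, with $d$ even whenever the point is interior.

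Finally, to extract the precise shape required by $\preceq$, I would set $m\coloneqq\max\{n\mid d(n)>0\}$ (with $m=0$ if $d\equiv 0$, the case $l=1$). For $n>m$ we have $d(n)=0$ since $d(n)\geq 0$; and for $n\leq m$ we cannot have $d(n)=0$, because \cref{lem:diagonal_stable} would then force $d(m)=0$, contradicting $d(m)>0$. Hence $\tilde{g}_l(n)>n$ for $1\leq n\leq m$ and $\tilde{g}_l(n)=n$ for $m<n\leq p$, which is exactly $\mathsf{diag}\preceq\mathsf{G}_l$ (note $m\leq p-1$ automatically, since $d(p)>0$ is impossible as $\tilde{g}_l(p)\leq p$). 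The main obstacle, and the only place oddness of $l$ is used, is the parity step: it is what prevents the $\pm 1$ increments from carrying the sequence across the diagonal without ever landing on it, and it is precisely this that fails for even $l$, which is why that case needs the more delicate analysis of the later sections.
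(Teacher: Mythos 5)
Your proof is correct and follows essentially the same route as the paper's: both track the difference $\tilde{g}_l(n)-n$, use its evenness (from $l$ odd) and the fact that it only changes by $0$ or $-2$ in the interior to show it cannot become negative without first hitting $0$, and then invoke \cref{lem:diagonal_stable} to pin the sequence to the diagonal and extract the structure required by $\preceq$. The only cosmetic difference is that you locate the transition point as the last index with $\tilde{g}_l(n)-n>0$, whereas the paper takes the first index with $\tilde{g}_l(m)=m$; these are equivalent.
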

\begin{proof}
First, we claim that $\mathsf{diag}\leq\mathsf{G}_l$.
If $\tilde{g}_l(n)=p$, then clearly $n\leq \tilde{g}_l(n)$.
Therefore, we may consider the case where $\tilde{g}_l(n)<p$.
At the initial point, $\tilde{g}_l(1)-1=l-1$ is a non-negative even integer, and we have either $\tilde{g}_l(n+1)-(n+1)=\tilde{g}_l(n)-n$ or $\tilde{g}_l(n+1)-(n+1)=(\tilde{g}_l(n)-n)-2$, since the values of the Legendre symbols are either $+1$ or $-1$.
Therefore, if $\tilde{g}_l(n)-n<0$ for some $n$, then there exists an $m<n$ such that $\tilde{g}_l(m)-m=0$.
However, if $\tilde{g}_l(m)-m=0$ for $m\leq p-1$, then by \cref{lem:diagonal_stable}, we have $\tilde{g}_l(n)-n=0$, which leads to a contradiction.
This proves the claim made in the beginning, and since $\tilde{g}_l(p)=p$, we can take 
$m$ such that $1\leq m\leq p$, $\tilde{g}_l(1)>1, \dots, \tilde{g}_l(m-1)>m-1$, and $\tilde{g}_l(m)=m$.
Then, by \cref{lem:diagonal_stable}, we have $\tilde{g}_l(m)-m=\tilde{g}_l(m+1)-(m+1)=\cdots=\tilde{g}_l(p)-p=0$.
This proves $\mathsf{diag}\preceq\mathsf{G}_l$.
\end{proof}

\begin{figure}
\centering
\begin{subfigure}{0.38\textwidth}
    \includegraphics[height=\textwidth]{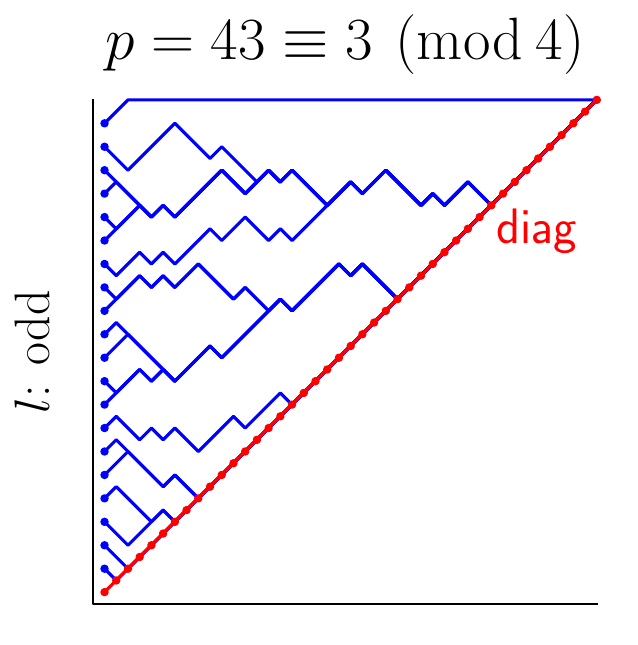}
\end{subfigure}
\hspace{0.05\textwidth} 
\begin{subfigure}{0.38\textwidth}
    \includegraphics[height=\textwidth]{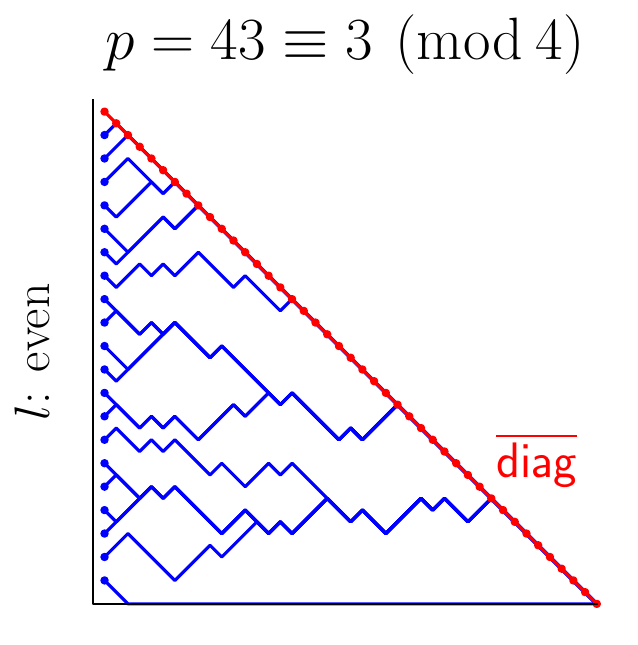}
\end{subfigure}
\caption{Plot of points belonging to sets $\mathsf{G}_l$, $\mathsf{diag}$, and $\overline{\mathsf{diag}}$.}
\label{fig:diagonals}
\end{figure}

\begin{proposition}\label{prop:anti_diag}
Assume that $p\equiv 3 \pmod{4}$.
For any even integer $l$ such that $0\leq l<p$, we have $\mathsf{G}_l\preceq\overline{\mathsf{diag}}$.
\end{proposition}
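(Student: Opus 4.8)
The plan is to mirror the structure of the proof of \cref{prop:diag}, but reflected about the anti-diagonal: instead of the quantity $\tilde g_l(n)-n$, I would track
\[
d(n)\coloneqq \tilde g_l(n)+n-p=\tilde g_l(n)-(p-n),
\]
so that $\mathsf{G}_l\le\overline{\mathsf{diag}}$ becomes the assertion $d(n)\le 0$ for all $n$, and $\mathsf{G}_l\preceq\overline{\mathsf{diag}}$ becomes the refinement that $d$ is strictly negative up to some index and identically zero afterwards.

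First I would record the analogue of \cref{lem:diagonal_stable} for the anti-diagonal, which is exactly where the hypothesis $p\equiv 3\pmod 4$ enters. If $\tilde g_l(m)=p-m$ with $0<m<p$, then
\[
\Bigl(\tfrac{p-m}{p}\Bigr)=\Bigl(\tfrac{-1}{p}\Bigr)\Bigl(\tfrac{m}{p}\Bigr)=-\Bigl(\tfrac{m}{p}\Bigr),
\]
since $\bigl(\tfrac{-1}{p}\bigr)=-1$ for $p\equiv 3\pmod 4$; hence the Legendre product in the recurrence equals $-\bigl(\tfrac{m}{p}\bigr)^2=-1$ and $\tilde g_l(m+1)=(p-m)-1=p-(m+1)$. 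Iterating shows that once $\mathsf{G}_l$ meets $\overline{\mathsf{diag}}$ it stays on it, i.e.\ $\tilde g_l(n)=p-n$ for all $n\ge m$.

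Next I would prove $d(n)\le 0$. At the start $d(1)=l+1-p\le 0$, and since $l$ is even it is also even; while $0<\tilde g_l(n)<p$ the recurrence gives $d(n+1)=d(n)+1+\bigl(\tfrac{n}{p}\bigr)\bigl(\tfrac{\tilde g_l(n)}{p}\bigr)\in\{d(n),\,d(n)+2\}$, so on the interior $d$ is non-decreasing and retains its parity. The boundary value $\tilde g_l(n)=0$ gives $d(n)=n-p\le 0$ and is absorbing, while $\tilde g_l(n)=p$ would give $d(n)=n>0$ and must therefore be excluded. I would argue by contradiction: if $n^\ast$ is the least index with $d(n^\ast)>0$, then every earlier point is interior (a visit to $0$ would trap $d$ at a value $\le 0$, and a visit to $p$ would already have produced $d>0$), so $d(n^\ast-1)\le 0$ is even and $d(n^\ast)=d(n^\ast-1)+2$ forces $d(n^\ast-1)=0$, that is $\tilde g_l(n^\ast-1)=p-(n^\ast-1)$; but then the anti-diagonal stability step gives $d(n^\ast)=0$, a contradiction. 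This simultaneously rules out $\tilde g_l(n)=p$ and establishes $\mathsf{G}_l\le\overline{\mathsf{diag}}$.

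Finally I would upgrade $\le$ to $\preceq$. Evaluating at $n=p$ gives $\tilde g_l(p)\le 0$, while $\tilde g_l(p)\ge 0$ always, so $\tilde g_l(p)=0$ and $d(p)=0$. The sequence $d$ is non-decreasing throughout (stepping by $0$ or $+2$ on the interior and by $+1$ once trapped at the value $0$), so letting $m^\ast$ be the least index with $d(m^\ast)=0$, monotonicity together with $d\le 0$ forces $d(n)=0$ for all $n\ge m^\ast$ and $d(n)<0$ for $n<m^\ast$; taking $m=m^\ast-1$ in the definition of $\preceq$ yields $\mathsf{G}_l\preceq\overline{\mathsf{diag}}$, with the extreme case $l=p-1$ giving $m=0$. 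The main obstacle is the bookkeeping around the boundaries — ruling out $\tilde g_l(n)=p$ and tracking parity across a possible visit to $0$ — together with isolating the single congruence-dependent computation that makes the anti-diagonal absorbing; this is precisely why the argument breaks for $p\equiv 1\pmod 4$, where $\bigl(\tfrac{-1}{p}\bigr)=+1$.
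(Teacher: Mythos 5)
Your proof is correct and takes essentially the same approach as the paper: the anti-diagonal stability computation via $\left(\frac{-1}{p}\right)=-1$ for $p\equiv 3\pmod{4}$ is exactly the paper's key step, and your parity/monotonicity argument for $d(n)=\tilde{g}_l(n)-(p-n)$ is precisely the mirrored version of the proof of \cref{prop:diag} that the paper invokes by saying the rest follows ``in a manner similar'' to that proof. You simply spell out the boundary bookkeeping at $0$ and $p$ that the paper leaves implicit.
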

\begin{proof}
If $\tilde{g}_l(m)=p-m$ for $1\leq m<p$, then
\[
\tilde{g}_l(m+1)=p-m+\left(\frac{m}{p}\right)\left(\frac{p-m}{p}\right)=p-m+\left(\frac{-1}{p}\right)=p-(m+1),
\]
by the first supplement to the law of quadratic reciprocity and the assumption $p\equiv 3\pmod{4}$.
Based on this fact, we can prove that $\mathsf{G}_l\preceq\overline{\mathsf{diag}}$, in a manner similar to the proof of \cref{prop:diag}.
\end{proof}

\begin{proposition}\label{prop:even_ineq}
Assume that $p\equiv 1\pmod{4}$.
Then, we have
\[
\mathsf{G}_0\preceq \mathsf{G}_2\preceq \cdots \preceq \mathsf{G}_{p-3} \preceq \mathsf{G}_{p-1}.
\]
\end{proposition}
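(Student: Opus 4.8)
The plan is to reduce the whole chain to the single comparison $\mathsf{G}_l \preceq \mathsf{G}_{l+2}$ for each even $l$ with $0 \le l \le p-3$, and to prove this step by a \emph{non-crossing} argument for the two sequences $a(n) \coloneqq \tilde{g}_l(n)$ and $b(n) \coloneqq \tilde{g}_{l+2}(n)$. Two structural facts drive everything. First, the defining recurrence of $\tilde{g}_l$ is deterministic: $\tilde{g}_l(n+1)$ depends only on the value $\tilde{g}_l(n)$ and on $n$. Hence if $a(m) = b(m)$ for some $m$, then $a(n) = b(n)$ for all $n \ge m$, i.e.\ equality is absorbing (this is the same mechanism as in \cref{lem:diagonal_stable}). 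Second, $0$ and $p$ are absorbing and act as a floor and a ceiling, so each of $a(n)$ and $b(n)$ remains in $\{0,1,\dots,p\}$ and, once it reaches $0$ or $p$, stays there.

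The heart of the matter is the pointwise inequality $a(n) \le b(n)$ for all $1 \le n \le p$, which I would establish by induction using the invariant that, for every $n$,
\[
a(n) \le b(n), \quad \text{and} \quad \bigl(\,b(n)-a(n) \text{ is even, or } a(n)=0, \text{ or } b(n)=p\,\bigr).
\]
At $n=1$ this holds because $b(1)-a(1)=2$ and $b(1)=l+2 \le p-1 < p$. For the inductive step I would split into cases. While both points are interior, i.e.\ $0 < a(n)$ and $b(n) < p$, the invariant forces $d(n) \coloneqq b(n)-a(n)$ to be even; since each of $a,b$ then moves by $\pm 1$, the increment $d(n+1)-d(n)$ lies in $\{-2,0,+2\}$, so $d$ stays even, and a nonnegative even integer can become negative only by first passing through $0$ — at which moment $a$ and $b$ have met and are absorbed into a common trajectory. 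Thus $d(n+1)\ge 0$ and the parity clause persists. If instead $a(n)=0$, then $a(n+1)=0$ and $b(n+1) \ge 0 = a(n+1)$ because $0$ is a floor; symmetrically, if $b(n)=p$, then $b(n+1)=p \ge a(n+1)$ because $p$ is a ceiling. In every case the invariant is preserved, so $a(n) \le b(n)$ throughout.

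Finally I would deduce $\mathsf{G}_l \preceq \mathsf{G}_{l+2}$ from this inequality together with the absorbing property of equality. Since $a(n)\le b(n)$ everywhere and the set $\{n : a(n)=b(n)\}$ is upward closed, its complement $\{n : a(n)<b(n)\}$ is an initial segment $\{1,\dots,m\}$ for some $0\le m\le p$ — which is exactly the defining condition of $\preceq$. Chaining over $l=0,2,\dots,p-3$ then yields $\mathsf{G}_0 \preceq \mathsf{G}_2 \preceq \cdots \preceq \mathsf{G}_{p-1}$, in a manner analogous to the proof of \cref{prop:diag}.

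The step I expect to be the genuine obstacle is the boundary bookkeeping. The clean parity invariant "$d(n)$ is even" is \emph{not} preserved once one trajectory is absorbed at $0$ or $p$ while the other is still interior, which is why the invariant above carries the explicit alternatives "$a(n)=0$ or $b(n)=p$"; the crux is to verify that in these degenerate configurations the floor/ceiling property alone already forbids the trajectories from crossing. I note that this reasoning uses only that the step has magnitude $1$ and that $0,p$ are absorbing, so it does not actually invoke $p\equiv 1\pmod 4$; the congruence hypothesis only delimits the regime in which the resulting ordering is subsequently applied.
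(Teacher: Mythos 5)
Your proposal is correct and follows essentially the same route as the paper: the paper's proof likewise establishes that equality of $\tilde{g}_l$ and $\tilde{g}_{l+2}$ is absorbing (checking the interior case, the case of $0$, and the case of $p$) and then appeals to the same even-parity non-crossing argument used for \cref{prop:diag} to obtain the pointwise inequality and the initial-segment structure defining $\preceq$. The only difference is that you spell out the floor/ceiling bookkeeping that the paper compresses into ``in a manner similar to the proofs of \cref{prop:diag} and \cref{prop:anti_diag}''; your closing observation that $p\equiv 1\pmod{4}$ is never actually invoked is likewise consistent with the paper's argument.
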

\begin{proof}
Let $l$ be an even integer such that $0\leq l\leq p-3$.
If $0<\tilde{g}_l(m)=\tilde{g}_{l+2}(m)<p$ for an integer $m$ such that $1\leq m<p$, then
\[
\tilde{g}_l(m+1)=\tilde{g}_l(m)+\left(\frac{m}{p}\right)\left(\frac{\tilde{g}_l(m)}{p}\right)=\tilde{g}_{l+2}(m)+\left(\frac{m}{p}\right)\left(\frac{\tilde{g}_{l+2}(m)}{p}\right)=\tilde{g}_{l+2}(m+1).
\]
If $\tilde{g}_l(m) = \tilde{g}_{l+2}(m) = 0$ or $\tilde{g}_l(m) = \tilde{g}_{l+2}(m) = p$, then we similarly have $\tilde{g}_l(m+1) = \tilde{g}_{l+2}(m+1)$.
By using this fact, we can prove that $\mathsf{G}_l\preceq \mathsf{G}_{l+2}$, in a manner similar to the proofs of \cref{prop:diag} and \cref{prop:anti_diag}.
\end{proof}

\begin{proof}[Proof of \cref{thm:main3} except for $l_{\mathrm{L}} < l_{\mathrm{R}}$]
If $l$ is an odd integer such that $1\leq l<p$, then by \cref{prop:diag}, $\tilde{g}_l(p)=p$.
If $p\equiv 3\pmod{4}$ and $l$ is an even integer such that $0\leq l<p$, then by \cref{prop:anti_diag}, $\tilde{g}_l(p)=0$.
Now, we assume that $p\equiv 1\pmod{4}$ and $l$ is an even integer such that $0\leq l<p$.
Set
\[
l_{\mathrm{L}}\coloneqq\max\{l\in 2\Z\mid \tilde{g}_l(p)=0\}+2 \ \text{ and } \ l_{\mathrm{R}}\coloneqq\min\{l\in 2\Z\mid \tilde{g}_l(p)=p\}.
\]
Using $l_{\mathrm{L}}$ and $l_{\mathrm{R}}$, we define the sets $I_p^{\mathrm{L}}$ and $I_p^{\mathrm{R}}$ as in the statement of \cref{thm:main3}.
Since
\[
\tilde{g}_0(p)\leq \tilde{g}_2(p)\leq \cdots \leq \tilde{g}_{p-3}(p)\leq \tilde{g}_{p-1}(p)
\]
holds by \cref{prop:even_ineq}, we see that $\tilde{g}_l(p)=0$ if and only if $l\in I_p^{\mathrm{L}}$, and $\tilde{g}_l(p)=p$ if and only if $l\in I_p^{\mathrm{R}}$.
By \cref{lem:trans_to_tilde}, $g_{\frac{p-1}{2},l}(p)\in\Z_{(p)}$ if and only if $l\in I_p^{\mathrm{L}}\sqcup I_p^{\mathrm{R}}$.
\end{proof}
\begin{remark}\label{rem:two_in_Jp}
Let $p$ denote a prime satisfying $p\equiv 1 \pmod{4}$.
In the above proof, $l_{\mathrm{L}}$ and $l_{\mathrm{R}}$ exist because $\tilde{g}_0(p)=0$ and $\tilde{g}_{p-1}(p)=p$.
The latter equality follows from
\[
\tilde{g}_{p-1}(2)=(p-1)+\left(\frac{1}{p}\right)\left(\frac{p-1}{p}\right)=p,
\]
since $p\equiv 1\pmod{4}$.
Let $J_p\coloneqq \{l\in 2\Z \mid l_{\mathrm{L}}\leq l < l_{\mathrm{R}}\}$ as in the statement of \cref{thm:main3}.
For small primes $p$, $2 \in I_p^{\mathrm{L}}$ holds, but primes $p$ for which $2\in J_p$ have also been found: 313, 1873, 2081, 2089, 2377, 4481, 5281, 6361, 6961, 7681, 8161, 8209, 8521, 8929, 9001, ....
Such primes may find some applications in the study of the non-integrality of $k$-G\"{o}bel sequences, but their distribution is not yet understood.
There are 502 of them below $10^6$.
\end{remark}

\begin{question}
Do infinitely many primes $p$ exist, satisfying $p \equiv 1 \pmod{4}$, such that $2\in J_p$?
\end{question}
\begin{definition}\label{def:}
For $\mathsf{A}=\{(n,a(n))\}_{1\leq n\leq p}\in\mathcal{L}$, we set
\begin{align*}
\mathsf{A}^+&\coloneqq\{(n,a(n))\in\mathsf{A} \mid a(n+1)=a(n)+1\}, \\
\mathsf{A}^-&\coloneqq\{(n,a(n))\in\mathsf{A} \mid a(n+1)=a(n)-1\}.
\end{align*}
We say that $\mathsf{A}\in\mathcal{L}$ is \emph{zigzag} if both $\mathsf{A}^+$ and $\mathsf{A}^-$ are non-empty.
\end{definition}
The following is the key criterion for determining whether $l_{\mathrm{L}}<l_{\mathrm{R}}$.

\begin{figure}
\centering
\begin{subfigure}{0.38\textwidth}
    \includegraphics[height=\textwidth]{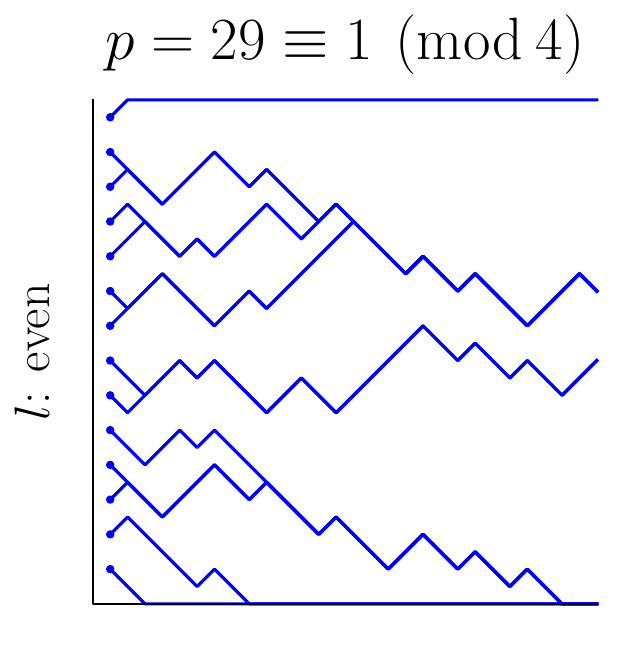}
\end{subfigure}
\hspace{0.05\textwidth} 
\begin{subfigure}{0.38\textwidth}
    \includegraphics[height=\textwidth]{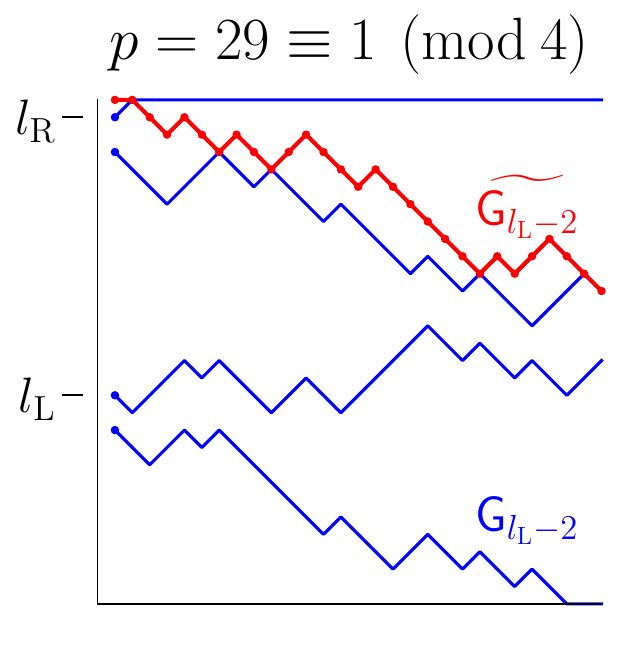}
\end{subfigure}
\caption{Plot of points belonging to sets $\mathsf{G}_l$ for even $l$ (left), and the barrier $\widetilde{\mathsf{G}_{l_{\mathrm{L}}-2}}$ (right).}
\label{fig:diagonals}
\end{figure}

\begin{theorem}\label{thm:empty_iff}
Assume that $p\equiv 1 \pmod{4}$.
The equality $l_{\mathrm{L}}=l_{\mathrm{R}}$ holds if and only if the following conditions hold for some even $l$ such that $0\leq l\leq p-3:$
\begin{enumerate}
\item\label{it:right_down} $\left(\frac{n}{p}\right)=-\left(\frac{l+1-n}{p}\right)$ holds for $1\leq n\leq l/2$,
\item\label{it:right_up} $\left(\frac{n}{p}\right)=\left(\frac{l+1+n}{p}\right)$ holds for $1\leq n\leq p-l-2$.
\end{enumerate}
\end{theorem}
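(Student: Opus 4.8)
The plan is to translate the two conditions into the statement that the extremal trajectories $\mathsf{G}_{l}$ and $\mathsf{G}_{l+2}$ are straight line segments, and then to identify $l_{\mathrm{L}}=l_{\mathrm{R}}$ as exactly the regime producing such segments. First I would unwind the first condition. Reading the recurrence defining $\tilde g_l$ along the candidate descending segment $\tilde g_l(n)=l+1-n$, the step $\tilde g_l(n+1)=\tilde g_l(n)-1$ is forced precisely when $\left(\frac{n}{p}\right)\left(\frac{l+1-n}{p}\right)=-1$, i.e. when $\left(\frac{n}{p}\right)=-\left(\frac{l+1-n}{p}\right)$. The key observation is that this equality is invariant under the fixed-point-free involution $n\mapsto l+1-n$ of $\{1,\dots,l\}$ (as $l$ is even, $l+1$ is odd), which swaps $\{1,\dots,l/2\}$ with $\{l/2+1,\dots,l\}$. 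Hence the first condition, stated only for $1\le n\le l/2$, is equivalent to the same equality for all $1\le n\le l$, and then by induction (using $0\le l\le p-3$, so $0<l+1-n<p$) to $\tilde g_l(n)=l+1-n$ for $1\le n\le l+1$; in particular $\tilde g_l(l+1)=0$, so $\tilde g_l(p)=0$. Symmetrically, the second condition covers exactly the $p-l-2$ steps of the ascending segment and is equivalent to $\tilde g_{l+2}(n)=l+1+n$ for $1\le n\le p-l-1$, whence $\tilde g_{l+2}(p)=p$. Thus the two conditions together say precisely that $\mathsf{G}_{l}$ is the straight descent to $0$ and $\mathsf{G}_{l+2}$ the straight ascent to $p$.

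Granting this dictionary, the implication \emph{conditions $\Rightarrow l_{\mathrm{L}}=l_{\mathrm{R}}$} is immediate: from $\tilde g_l(p)=0$ we get $l\le l_{\mathrm{L}}-2$, from $\tilde g_{l+2}(p)=p$ we get $l_{\mathrm{R}}\le l+2$, and \cref{prop:even_ineq} gives $l_{\mathrm{L}}\le l_{\mathrm{R}}$, so $l+2\le l_{\mathrm{L}}\le l_{\mathrm{R}}\le l+2$. For the reverse implication I would first isolate the geometric engine underlying \cref{prop:diag,prop:anti_diag}: while $0<\tilde g_l(n)<p$, each step changes $\tilde g_l(n)-n$ by $0$ or $-2$ and $\tilde g_l(n)+n-p$ by $0$ or $+2$, so the former is non-increasing and the latter non-decreasing. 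Evaluating at $n=p$ yields the confinements
\[
\tilde g_l(p)=p\ \Longrightarrow\ \tilde g_l(n)\ge n\ \ (\forall n),\qquad \tilde g_l(p)=0\ \Longrightarrow\ \tilde g_l(n)\le p-n\ \ (\forall n).
\]

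For the reverse implication proper, set $l=l_{\mathrm{L}}-2=l_{\mathrm{R}}-2$, so that $\tilde g_l(p)=0$, $\tilde g_{l+2}(p)=p$, and (the endpoints differing) $\mathsf{G}_l,\mathsf{G}_{l+2}$ never merge, giving $\tilde g_l(n)<\tilde g_{l+2}(n)$ throughout by \cref{prop:even_ineq}. By the dictionary above it suffices to prove these trajectories are straight, which I would phrase through the gap $D(n)\coloneqq\tilde g_{l+2}(n)-\tilde g_l(n)$. Since each trajectory moves by $\pm1$, one has $D(1)=2$ and $D(n)\le 2n$, with equality for $n\le m$ if and only if both trajectories are straight up to time $m$; and straightness of the pair is exactly $D(n)=2n$ until $\tilde g_l$ is absorbed at $0$. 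Everything therefore reduces to showing that $l_{\mathrm{L}}=l_{\mathrm{R}}$ forces the gap to grow maximally, equivalently that $\tilde g_l$ reaches $0$ at the earliest possible time $l+1$ and $\tilde g_{l+2}$ reaches $p$ at the earliest possible time $p-l-1$.

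\textbf{The main obstacle} is exactly this last step. The confinements $\tilde g_l\le p-n$ and $\tilde g_{l+2}\ge n$ are by themselves too weak to force maximal separation, so one must exploit that $\mathsf{G}_l,\mathsf{G}_{l+2}$ are \emph{adjacent} extremal trajectories with \emph{no} even trajectory ending strictly between $0$ and $p$. My plan here is a contradiction argument: if the gap first fails to increase by $2$ at some time $n_0$, then both trajectories are straight up to $n_0$ (by maximality of $D$), and I would track the resulting deviation of $\mathsf{G}_l$ above its descending segment, propagated by the recurrence and trapped between the two confinements, to manufacture an even trajectory whose terminal value lies strictly between $0$ and $p$ --- contradicting $l_{\mathrm{L}}=l_{\mathrm{R}}$. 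Controlling this propagation, i.e. ruling out that such a deviation is reabsorbed before $n=p$ without ever producing an intermediate endpoint, is the delicate point, and it is here that I expect the genuinely geometric input on the distribution of quadratic residues to be needed.
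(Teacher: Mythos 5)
Your dictionary between the two Legendre-symbol conditions and straightness of the trajectories is correct and coincides with the paper's first step (the paper phrases it as: condition \eqref{it:right_down} is equivalent to $\mathsf{G}_l$ not being zigzag together with $l\in I_p^{\mathrm{L}}$, and condition \eqref{it:right_up} to $\mathsf{G}_{l+2}$ not being zigzag together with $l+2\in I_p^{\mathrm{R}}$), and your proof of the implication ``conditions $\Rightarrow l_{\mathrm{L}}=l_{\mathrm{R}}$'' is the same as the paper's. But the converse direction --- the actual content of the theorem --- is not proved in your proposal, and you say so yourself: the step ``if the gap $D$ first fails to grow maximally, track the deviation \dots to manufacture an even trajectory whose terminal value lies strictly between $0$ and $p$'' is the missing argument, not a reduction of it. It is also unclear what trajectory you could manufacture: the even trajectories form a fixed finite family, there is no even trajectory strictly between the adjacent pair $\mathsf{G}_{l_{\mathrm{L}}-2}$ and $\mathsf{G}_{l_{\mathrm{L}}}$, and under the hypothesis $l_{\mathrm{L}}=l_{\mathrm{R}}$ every even trajectory ends at $0$ or $p$ by definition; the only candidate for a contradiction is $\mathsf{G}_{l_{\mathrm{L}}}$ itself, and showing that it cannot end at $p$ is precisely what has to be proved. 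Your confinement inequalities (e.g.\ $\tilde g_l(n)\le p-n$ whenever $\tilde g_l(p)=0$) are correct but, as you note, too weak to do this.

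What closes the gap in the paper is a concrete geometric mechanism absent from your proposal: the \emph{barrier} $\widetilde{\mathsf{G}_{l_{\mathrm{L}}-2}}=\{(p-n,\,p-\tilde g_{l_{\mathrm{L}}-2}(n))\}_{0\leq n\leq p-1}$, the $180^\circ$ rotation of the lower trajectory about $(p/2,p/2)$. Because $p\equiv 1\pmod 4$ gives $\left(\frac{-1}{p}\right)=+1$, the step rule $\left(\frac{n}{p}\right)\left(\frac{g}{p}\right)$ is invariant under $(n,g)\mapsto(p-n,p-g)$, so the rotated path obeys the same local dynamics read backwards in time. A crossing argument then shows that if $\mathsf{G}_{l_{\mathrm{L}}-2}$ is zigzag, the trajectory $\mathsf{G}_{l_{\mathrm{L}}}$ can never rise above this barrier: along any stretch where the two agree, the identity of Legendre products forces them to take the same downward steps, which is incompatible with a crossing. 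This yields $\mathsf{G}_{l_{\mathrm{L}}}\leq\widetilde{\mathsf{G}_{l_{\mathrm{L}}-2}}$, hence $\tilde g_{l_{\mathrm{L}}}(p)\leq p-l_{\mathrm{L}}+1<p$ and $l_{\mathrm{L}}<l_{\mathrm{R}}$; a parallel argument with an explicit piecewise-linear barrier handles the case where $\mathsf{G}_{l_{\mathrm{L}}-2}$ is straight but $\mathsf{G}_{l_{\mathrm{R}}}$ is zigzag. This rotation symmetry is exactly the ``genuinely geometric input'' you anticipated would be needed; without it (or a substitute for it), your proposal does not establish the ``only if'' direction of \cref{thm:empty_iff}.
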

\begin{proof}
We can easily check that the condition \eqref{it:right_down} is equivalent to $\mathsf{G}_l$ not being zigzag and $l\in I_p^{\mathrm{L}}$, while the condition \eqref{it:right_up} is equivalent to $\mathsf{G}_{l+2}$ not being zigzag and $l+2\in I_p^{\mathrm{R}}$.
In particular, if both conditions \eqref{it:right_down} and \eqref{it:right_up} hold, then we have $l+2=l_{\mathrm{L}}=l_{\mathrm{R}}$. 
Therefore, it is sufficient to show that if at least one of $\mathsf{G}_{l_{\mathrm{L}}-2}$ and $\mathsf{G}_{l_{\mathrm{R}}}$ is zigzag, then we have $l_{\mathrm{L}}<l_{\mathrm{R}}$.

We define $\widetilde{\mathsf{G}_{l_{\mathrm{L}}-2}}$ by
\[
\widetilde{\mathsf{G}_{l_{\mathrm{L}}-2}}\coloneqq\{(p-n,p-\tilde{g}_{l_{\mathrm{L}}-2}(n))\}_{0\leq n\leq p-1}.
\]
Here, we set $\tilde{g}_{l_{\mathrm{L}}-2}(0)\coloneqq l_{\mathrm{L}}-1$.
First, assume that $\mathsf{G}_{l_{\mathrm{L}}-2}$ is zigzag.
We prove that $\widetilde{\mathsf{G}_{l_{\mathrm{L}}-2}}$ is a \emph{barrier}, meaning that 
\begin{equation}\label{eq:barrier}
\mathsf{G}_{l_{\mathrm{L}}}\leq\widetilde{\mathsf{G}_{l_{\mathrm{L}}-2}}.
\end{equation}
Let $n^*\coloneqq\min\{n\in\Z\mid 1\leq n\leq p, \tilde{g}_{l_{\mathrm{L}}-2}(n)=0\}$.
Since $l_{\mathrm{L}}$ is even, $n^*$ is odd.
Because $\mathsf{G}_{l_{\mathrm{L}}-2}$ is zigzag, $\tilde{g}_{l_{\mathrm{L}}-2}(n)>0$ for $1\leq n\leq l_{\mathrm{L}}-1$ and $n^*\geq l_{\mathrm{L}}+1$.
Since $\tilde{g}_{l_{\mathrm{L}}-2}(p-1)=0$ holds by $p\equiv 1\pmod{4}$, we have $n^*\leq p-2$.
Therefore, for $1\leq n\leq p-n^*$,
\[
\tilde{g}_{l_{\mathrm{L}}}(n)\leq l_{\mathrm{L}}+(p-n^*-1)\leq p-2.
\]
Define $h(n)\coloneqq(p-\tilde{g}_{l_{\mathrm{L}}-2}(p-n))-\tilde{g}_{l_{\mathrm{L}}}(n)$.
By the parity of $l_{\mathrm{L}}$ and $n^*$, we see that $h(p-n^*)\geq 2$ is even.
For $p-n^*\leq n<p$, by definition, we have
\[
h(n+1)-h(n)\in\{+2, 0, -2\},
\]
unless $\tilde{g}_{l_{\mathrm{L}}}(n)=p$.
Assume that $h(n)<0$ for some $n$ such that $p-n^*+2\leq n\leq p$.
Then, we can take some $m$ and $m'$, with $3 \leq m \leq m' < p$, such that
\[
h(m-1)=+2,\quad  h(m)=\cdots =h(m')=0, \quad h(m'+1)=-2.
\]
In this situation, since $\tilde{g}_{l_{\mathrm{L}-2}}(p-m+1)=\tilde{g}_{l_{\mathrm{L}-2}}(p-m)-1$, we see that $p-m\in\mathsf{G}_{l_{\mathrm{L}}-2}^-$.
Hence,
\[
\left(\frac{p-m}{p}\right)\left(\frac{\tilde{g}_{l_{\mathrm{L}}-2}(p-m)}{p}\right)=-1
\]
must hold, and from $\tilde{g}_{l_{\mathrm{L}}}(m)=p-\tilde{g}_{l_{\mathrm{L}}-2}(p-m)$, we have
\[
\left(\frac{m}{p}\right)\left(\frac{\tilde{g}_{l_{\mathrm{L}}}(m)}{p}\right)=-1.
\]
This implies that $m\in\mathsf{G}_{l_{\mathrm{L}}}^-$.
By repeating a similar argument, we see that for any $m\leq n\leq m'$, we have $n\in\mathsf{G}_{l_{\mathrm{L}}}^-$.
In particular, $m'\in\mathsf{G}_{l_{\mathrm{L}}}^-$; however, it is impossible that $h(m'+1)=-2$.
Thus, we have $h(n)\geq 0$ for every $n$, and we get \eqref{eq:barrier}.
In particular, $\tilde{g}_{l_{\mathrm{L}}}(p)\leq p-l_{\mathrm{L}}+1<p$ and we see that $l_{\mathrm{L}}<l_{\mathrm{R}}$.

Next, assume that $\mathsf{G}_{l_{\mathrm{L}}-2}$ is not zigzag and $\mathsf{G}_{l_{\mathrm{R}}}$ is zigzag.
In this case, we have
\[
\widetilde{\mathsf{G}_{l_{\mathrm{L}}-2}}=\{(n,p) \mid 1\leq n\leq p-l_{\mathrm{L}}+1\}\cup\{(p-n,p-l_{\mathrm{L}}+n+1) \mid 0\leq n\leq l_{\mathrm{L}}-2\}.
\]
If $l_{\mathrm{L}}=l_{\mathrm{R}}$, since $\mathsf{G}_{l_{\mathrm{L}}}$ is zigzag, we can check, by a similar argument as before, that $\mathsf{G}_{l_{\mathrm{L}}}\preceq\widetilde{\mathsf{G}_{l_{\mathrm{L}}-2}}$ and $\tilde{g}_{l_{\mathrm{L}}}(p)<p$, which contradicts the definition of $l_{\mathrm{R}}$.
Therefore, we have $l_{\mathrm{L}}<l_{\mathrm{R}}$.
\end{proof} 
% --------------------------------------------------------------------------

% --------------------------------------------------------------------------
\section{Special sequences $(a_{p,l}(n))_{1\leq n\leq p-1}$ and $(b_{l,s}(n))_{1\leq n\leq l+1}$}\label{sec:special_sequences}
In this section, we study special sequences related to the conditions in \cref{thm:empty_iff}.
\begin{lemma}[Arithmetic billiards]\label{lem:billiards}
Let $p$ be a prime number such that $p\equiv 1\pmod{4}$, and $l$ an even integer such that $0\leq l\leq p-3$.
First, we consider the case where $l \leq (p-5)/2$.
On an $x$-$y$ plane, let $R_{p,l}$ denote the rectangle with vertices $(0,l+1)$, $(l+1,0)$, $(p/2, p/2-(l+1))$, and $(p/2-(l+1),p/2)$, and let $L_{p,l}$ denote the set of all lattice points on the boundary of $R_{p,l}$ where both coordinates are integers, excluding $(0,l+1)$ and $(l+1, 0)$.
From the vertex $(0,l+1)$, draw a path horizontally along the $x$-axis.
When it reaches the edge of $R_{p,l}$, reflect it in the direction parallel to the $y$-axis and continue drawing the path.
When it reaches the edge of $R_{p,l}$ again, reflect it in the direction parallel to the $x$-axis and continue drawing.
By repeating this process, the path passes through each element of the $p-2$ elements of $L_{p,l}$ exactly once and reaches the vertex $(l+1,0)$.
Moreover, the entire path is symmetric with respect to the line $y=x$.
For the case where $l\geq (p-1)/2$, let $R_{p,l}$ denote the rectangle with vertices $(0,p-(l+1))$, $(p-(l+1),0)$, $(p/2, (l+1)-p/2)$, and $((l+1)-p/2,p/2)$.
Then, a similar statement holds.
\end{lemma}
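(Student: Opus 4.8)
The plan is to recognize the construction as a classical \emph{arithmetic billiard} after rotating the lattice by $45^\circ$, and then to run the standard unfolding argument; the only nonstandard features are that the trajectory lives on a sublattice of $\Z^2$ and that two of the four corners of $R_{p,l}$ are not lattice points. I treat the case $l\leq(p-5)/2$ in detail, since the case $l\geq(p-1)/2$ is identical after relabeling: the same coordinate change produces an axis-aligned rectangle whose two side lengths are an odd and an even positive integer summing to $p$, so the argument applies verbatim.

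First I would change coordinates by $(\xi,\eta)\coloneqq(x+y-(l+1),\,x-y+(l+1))$. This map sends $R_{p,l}$ onto the axis-aligned rectangle $[0,A]\times[0,B]$ with $A\coloneqq p-2l-2$ and $B\coloneqq 2l+2$; it carries $\Z^2$ onto the sublattice $\Lambda\coloneqq\{(\xi,\eta)\in\Z^2\mid \xi+\eta\text{ is even}\}$; it turns horizontal motion into the direction $(1,1)$ and vertical motion into $(1,-1)$, so the reflecting path becomes an ordinary $45^\circ$ billiard; and it sends the start vertex $(0,l+1)$ to the corner $(0,0)$, the target vertex $(l+1,0)$ to the corner $(0,B)$, and the line $y=x$ to the horizontal midline $\eta=B/2$. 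Note that $A$ is odd, $B$ is even, and since $A+B=p$ with $0<B<p$, the primality of $p$ forces $\gcd(A,B)=1$. A short check shows that the two non-lattice vertices of $R_{p,l}$ map to the corners $(A,0)$ and $(A,B)$, which do \emph{not} lie in $\Lambda$ because $A$ is odd; only $(0,0)$ and $(0,B)$ are corners in $\Lambda$. Counting the boundary points of the rectangle lying in $\Lambda$ edge by edge gives exactly $A+B=p$ of them, i.e.\ $\#L_{p,l}+2$.

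Next I would unfold the billiard by reflecting the rectangle repeatedly across its sides, so that the trajectory becomes the straight diagonal $\{(t,t)\mid 0\leq t\leq L\}$ with $L=\operatorname{lcm}(A,B)=AB$, and the folding map is $\pi(t)=(\tau_A(t),\tau_B(t))$, where $\tau_m$ denotes the triangle wave of period $2m$ (so $\tau_m(km)$ equals $0$ or $m$ according to the parity of $k$). The path meets the boundary exactly at the times $t\in(A\Z\cup B\Z)\cap[0,AB]$, and no interior time lies in $A\Z\cap B\Z=AB\Z$, so the trajectory runs from corner to corner with no corner in between. Evaluating at the endpoint, $\tau_A(AB)=0$ (as $B$ is even) and $\tau_B(AB)=B$ (as $A$ is odd), so the path terminates at $(0,B)$, the image of $(l+1,0)$; this already gives the assertion that it reaches $(l+1,0)$. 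For the symmetry I would use the involution $t\mapsto AB-t$ of $[0,AB]$, which by the same parity computation satisfies $\tau_A(AB-t)=\tau_A(t)$ and $\tau_B(AB-t)=B-\tau_B(t)$; hence it induces the reflection $(\xi,\eta)\mapsto(\xi,B-\eta)$ on the trajectory, which in the original coordinates is reflection across $y=x$, proving that the path is symmetric about that line.

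The hard part will be the claim that every boundary point of $\Lambda$ is visited exactly once. I would split the boundary touches by type. The times $t=kA$ for $0\leq k\leq B$ land on the left or right edge according to the parity of $k$, at height $\tau_B(kA)$; since $\gcd(A,2B)=1$, the relation $\tau_B(k_1A)=\tau_B(k_2A)$ forces $k_1\equiv\pm k_2\pmod{2B}$, and with $0\leq k_1,k_2\leq B$ this yields $k_1=k_2$, so these $B+1$ times biject onto the $B+1$ points of $\Lambda$ on the left and right edges. An analogous argument, after dividing out the common factor $2$ and using $\gcd(A,l+1)=1$, shows that the times $t=jB$ for $0\leq j\leq A$ biject onto the $A+1$ points of $\Lambda$ on the top and bottom edges; here one must check that the apparent coincidences with $j_1+j_2=A$ occur only between points on opposite edges and so do not collide. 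The two families overlap precisely in the corner times $t\in\{0,AB\}$, so after removing the double count the $p$ touch-times biject onto the $p$ boundary points of $\Lambda$, with $(0,0)$ and $(0,B)$ as the two endpoints. The main obstacle is exactly this bookkeeping: extracting injectivity from $\gcd(A,B)=1$ while keeping track of which edge each touch lands on, and correctly handling the sublattice $\Lambda$ together with the two non-lattice corners. Undoing the coordinate change translates the bijection back into the statement that the path visits each element of $L_{p,l}$ exactly once.
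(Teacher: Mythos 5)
Your proposal is correct and follows essentially the same route as the paper: the paper's proof simply observes that the (rotated) rectangle has coprime side lengths $p-2(l+1)$ and $2(l+1)$ (respectively $2(l+1)-p$ and $2(p-(l+1))$ in the second case) and cites "the standard argument regarding arithmetic billiards," omitting all details. What you have written is precisely that standard argument carried out in full --- the $45^\circ$ coordinate change to the rectangle $[0,A]\times[0,B]$ with $A=p-2(l+1)$, $B=2(l+1)$, the unfolding via triangle waves, and the coprimality-based bookkeeping --- so it matches the paper's intended proof, just made explicit.
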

\begin{figure}
\centering
\includegraphics[width=0.4\textwidth]{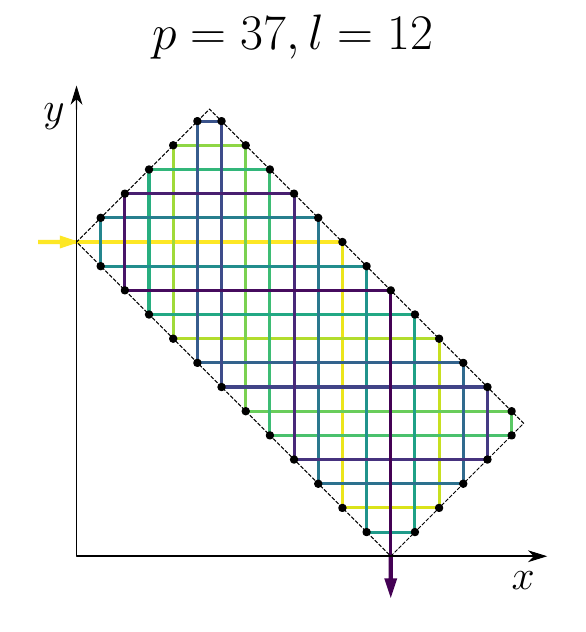}
\caption{An example of \cref{lem:billiards}.}
\label{fig:billiard}
\end{figure}
\begin{proof}
For the case where $l\leq (p-5)/2$, since $p-2(l+1)$ and $2(l+1)$ are coprime, the result can be proved by the standard argument regarding \emph{arithmetic billiards}, and thus the details are omitted.
For $l\geq (p-1)/2$, the result similarly follows because $2(l+1)-p$ and $2(p-(l+1))$ are coprime.
\end{proof}
\begin{proposition}\label{prop:unique_sequence_a}
Let $p$ be a prime number such that $p\equiv 1\pmod{4}$, and let $l$ be an even integer such that $0\leq l\leq p-3$.
Then, there exists a unique $\pm1$ sequence $(a_n)_{1\leq n\leq p-1}$ $(a_n\in\{+1,-1\})$ satisfying the following conditions$:$
\begin{enumerate}
\item $a_1=1$,
\item\label{it:seq_a2} $a_n=a_{p-n}$ holds for $1\leq n\leq (p-1)/2$,
\item\label{it:seq_a3} $a_n=-a_{l+1-n}$ holds for $1\leq n\leq l/2$,
\item\label{it:seq_a4} $a_n=a_{l+1+n}$ holds for $1\leq n\leq p-l-2$.
\end{enumerate}
\end{proposition}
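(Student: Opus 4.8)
The plan is to recast the four conditions as a consistency-and-connectivity problem for a signed graph and to extract both facts from the arithmetic billiard of \cref{lem:billiards}. Put a vertex for each index $1\le n\le p-1$ and draw an edge for each instance of conditions \eqref{it:seq_a2}, \eqref{it:seq_a3}, \eqref{it:seq_a4}, labelling the condition-\eqref{it:seq_a3} edges with a minus sign (``opposite values'') and all others with a plus sign (``equal values''). A $\pm1$ sequence obeying \eqref{it:seq_a2}--\eqref{it:seq_a4} is then the same as a vertex labelling by $\{+1,-1\}$ with $a_u a_v=\operatorname{sign}(uv)$ on every edge $uv$, and the first condition just pins $a_1=+1$. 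In this language, existence amounts to \emph{balance} of the signed graph (every cycle carries an even number of minus edges), while uniqueness, once $a_1$ is fixed, amounts to its \emph{connectedness}.

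Uniqueness I would dispatch first and cleanly. If $a$ and $a'$ both satisfy all four conditions, then $b_n\coloneqq a_na'_n\in\{\pm1\}$ satisfies $b_1=1$ together with the \emph{unsigned} relations $b_n=b_{p-n}=b_{l+1-n}=b_{l+1+n}$ (the two minus signs in condition \eqref{it:seq_a3} cancel in the product). Hence $b$ is constant on each connected component of the unsigned constraint graph, so it suffices to show that graph is connected. This is exactly what the billiard provides: the shift $n\mapsto n+(l+1)$ of condition \eqref{it:seq_a4}, together with the reflections $n\mapsto p-n$ and $n\mapsto l+1-n$ of conditions \eqref{it:seq_a2} and \eqref{it:seq_a3} applied inside their admissible ranges, is precisely the bounce dynamics of the trajectory in $R_{p,l}$. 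Since by \cref{lem:billiards} the single trajectory meets every point of $L_{p,l}$, every index is linked to $1$, forcing $b\equiv 1$, that is, $a=a'$.

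For existence I would build $a$ by propagating $a_1=+1$ along this connected graph, flipping the sign at each condition-\eqref{it:seq_a3} step; connectedness makes this assign a value to every index, and the only thing to verify is that the value is independent of the chosen chain of relations. Concretely, one reads the values off the billiard trajectory, recording a sign flip exactly at each reflection off the side lying on $x+y=l+1$ (condition \eqref{it:seq_a3}), the reflections off $x+y=p$ (condition \eqref{it:seq_a2}) and the straight runs (condition \eqref{it:seq_a4}) being sign-preserving. Because the trajectory visits every point once, this produces a single candidate sequence, and what remains is to check that the relations which are \emph{not} consecutive steps of the trajectory hold as well.

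This last check is the main obstacle: it is equivalent to showing the signed graph is balanced, i.e.\ that every cycle meets the condition-\eqref{it:seq_a3} edges an even number of times. Here I would lean on the reflective symmetry of the trajectory about $y=x$ guaranteed by \cref{lem:billiards}, matching it with condition \eqref{it:seq_a2}; the point is that $n\mapsto p-n$ carries no sign precisely because $p\equiv1\pmod 4$ makes $\left(\frac{-1}{p}\right)=+1$, so the mirror symmetry is sign-preserving and pairs each chord with its mirror image, which has the same flip-parity. A parity count of the reflections off $x+y=l+1$ accumulated between the two endpoints of any chord then forces that parity to be even, giving balance and hence a consistent (and, by the uniqueness step, the unique) sequence $a$. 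I expect the bookkeeping of these flip-parities --- translating ``number of bounces off a given side between two trajectory points'' into an arithmetic statement via the coprimality of $2(l+1)$ and $p-2(l+1)$ --- to be the technically delicate part.
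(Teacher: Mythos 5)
Your uniqueness half is correct and is essentially the paper's own argument: the product $b_n=a_na'_n$ kills all the signs, and connectedness of the constraint graph does follow from \cref{lem:billiards}, because consecutive bounce points of the trajectory share a coordinate and every index of $\{1,\dots,(p-1)/2\}$ occurs among the coordinates of points of $L_{p,l}$. The genuine gap is in the existence half, and you have located it yourself: in your formulation everything hinges on the signed graph being \emph{balanced}, and this is never proved. It is not minor bookkeeping. Your graph on the $p-1$ indices has cyclomatic number of order $(p-l)/2$, so there are that many independent cycles on which the minus-parity must be checked; the sentence ``a parity count of the reflections off $x+y=l+1$ accumulated between the two endpoints of any chord then forces that parity to be even'' is a restatement of what must be proved, not a proof of it. Moreover, the reason you offer --- that $n\mapsto p-n$ is sign-preserving because $p\equiv1\pmod 4$ gives $\left(\frac{-1}{p}\right)=+1$ --- is a category error: the proposition is a purely combinatorial statement about abstract $\pm1$ sequences in which no Legendre symbols occur; condition (2) carries a plus sign by fiat, and $p\equiv1\pmod 4$ enters only through the parity and geometry of the billiard. (Your sketch also tacitly assumes $l\le(p-5)/2$: for $l\ge(p-1)/2$ the rectangle $R_{p,l}$ has no side on the line $x+y=l+1$, and the sign assignment must be set up differently, as the paper does.)

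The paper makes the consistency problem disappear rather than solving it cycle by cycle, and this is the idea your proposal is missing. Fold by condition (2): regard $a$ as a sequence on $\{1,\dots,(p-1)/2\}$ only. After folding, conditions (3) and (4) together are \emph{exactly} the family of constraints $a_na_m=\psi_{p,l}(n,m)$ for $(n,m)\in L_{p,l}$, where $\psi_{p,l}$ is $-1$ precisely on the lattice points of the segment $x+y=l+1$ and $+1$ elsewhere; there are no constraints other than the bounce points. By \cref{lem:billiards} the trajectory passes through each of the $p-2$ points of $L_{p,l}$ exactly once and is symmetric about $y=x$, so the distinct nontrivial constraints number only $(p-3)/2$ (one per mirror pair of off-diagonal bounce points; the diagonal point gives the vacuous $a_d^2=+1$), and the trajectory orders them so that each constraint introduces exactly one index not seen before. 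In other words, the folded constraint graph is a tree (indeed a path) on $(p-1)/2$ vertices, so propagating a single undetermined sign along the trajectory can never run into a contradiction: existence is automatic, uniqueness comes along for free, and the condition $a_1=1$ pins the one remaining sign. Without this folding step --- or some equivalent, actually executed, verification of balance --- your proposal proves uniqueness but not existence.
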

\begin{proof}
First, we consider the case where $l\leq (p-5)/2$.
We define a mapping $\psi_{p,l}\colon L_{p,l}\to\{+1,-1\}$ as follows: assign $-1$ to the points in the intersection of $L_{p,l}$ and the line segment connecting $(0,l+1)$ and $(l+1,0)$, and assign $+1$ to all other points. 
Note that the condition \eqref{it:seq_a3} is equivalent to $a_na_{l+1-n}=-1$ for $1\leq n\leq l$.
The condition \eqref{it:seq_a4} is equivalent to $a_na_{l+1+n}=+1$ for $1\leq n\leq p-l-2$, and furthermore, this is equivalent to $a_na_m=+1$ for all $(n,m)\in L_{p,l}\setminus\{(1,l), (2,l-1), \dots, (l,1)\}$ under the condition \eqref{it:seq_a2}.
Therefore, it is sufficient to show that a $\pm1$ sequence $(a_n)_{1\leq n\leq(p-1)/2}$ is uniquely determined by the conditions $a_1=1$ and $\psi_{p,l}(n,m)=a_na_m$ for all $(n,m)\in L_{p,l}$.

In the setting of \cref{lem:billiards}, considering symmetry, list the first half of the lattice points in $L_{p,l}$ that the path passes through, in the order they are visited, as follows:
\begin{align*}
&(0,\sigma(1)) \longrightarrow (\tau(1),\sigma(1)) \\
&\longrightarrow (\tau(1), \sigma(2)) \longrightarrow (\tau(2), \sigma(2)) \\
&\longrightarrow (\tau(2), \sigma(3)) \longrightarrow (\tau(3), \sigma(3)) \\
&\longrightarrow \cdots \\
&\longrightarrow (\tau((p-5)/4),\sigma((p-1)/4)) \longrightarrow (\tau((p-1)/4), \sigma((p-1)/4)) \\
&\longrightarrow ((p-(l+1))/2, (p-(l+1))/2),
\end{align*}
where $\sigma$ and $\tau$ are permutations on
$\{1,2,\dots,(p-1)/2\}$ with $\sigma(1)=l+1$, $\tau(1)=p-2(l+1)$, and $\tau((p+1)/2-n)=\sigma(n)$ for $1\leq n\leq (p-1)/2$.
Since $a_{\tau(1)}a_{\sigma(1)}=\psi_{p,l}(\tau(1),\sigma(1))$, we have $a_{\tau(1)}=\psi_{p,l}(\tau(1),\sigma(1))\cdot a_{\sigma(1)}$.
Next, since $a_{\tau(1)}a_{\sigma(2)}=\psi_{p,l}(\tau(1),\sigma(2))$, we have
\[
a_{\sigma(2)}=\psi_{p,l}(\tau(1),\sigma(2))\cdot a_{\tau(1)}=\psi_{p,l}(\tau(1),\sigma(2))\psi_{p,l}(\tau(1),\sigma(1))\cdot a_{\sigma(1)}.
\]
By repeating this argument, the values of $a_{\tau(1)}, \dots, a_{\tau((p-1)/4)}$ and $a_{\sigma(2)}, \dots, a_{\sigma((p-1)/4)}$ are uniquely determined up to the value of $a_{\sigma(1)}$.
In this process, we obtain the equality $a_1=\pm a_{\sigma(1)}$ and the value of $a_{\sigma(1)}$ is also determined by the condition $a_1=1$.

For the case where $l\geq (p-1)/2$, we define a mapping $\psi_{p,l}\colon L_{p,l}\to\{+1,-1\}$ by the following: assign $+1$ to the points in the intersection of $L_{p,l}$ and the line segment connecting $(0,p-(l+1))$ and $(p-(l+1),0)$, and assign $-1$ to all other points.
Then, the rest of the argument proceeds in exactly the same manner.
\end{proof}
\begin{example}
For example, in the case of \cref{fig:billiard}, we have
\[
\sigma = \left(\begin{array}{cccccccccccccccccc}
1 & 2 & 3 & 4 & 5 & 6 & 7 & 8 & 9 & 10 & 11 & 12 & 13 & 14 & 15 & 16 & 17 & 18 \\
13 & 2 & 9 & 17 & 6 & 5 & 16 & 10 & 1 & 12 & 14 & 3 & 8 & 18 & 7 & 4 & 15 & 11 
\end{array}\right),
\]
and the values of $a_n$ up to $a_{13}$ are determined sequentially as follows:
\[
\begin{array}{cccccc}
a_{11}=a_{13}, & a_2=-a_{13}, & a_{15}=-a_{13}, & a_9=-a_{13}, & a_4=a_{13}, & a_{17}=a_{13}, \\  a_7=a_{13}, & a_6=-a_{13}, & a_{18}=-a_{13}, & a_5=-a_{13}, & a_8=a_{13}, & a_{16}=a_{13}, \\ a_3=a_{13}, & a_{10}=-a_{13}, & a_{14}=-a_{13}, & a_1=-a_{13}, & a_{12}=a_{13}.
\end{array}
\]
From $a_1=-a_{13}$, we find that $a_{13}=-1$, and $(a_{37,12}(n))_{1\leq n\leq 18}$ is given by
\[
(+1,+1,-1,-1,+1,+1,-1,-1,+1,+1,-1,-1,-1,+1,+1,-1,-1,+1).
\]
\end{example}
\begin{definition}
For each $p$ and $l$, $(a_{p,l}(n))_{1\leq n\leq p-1}$ denotes the uniquely existing sequence in \cref{prop:unique_sequence_a}.
\end{definition}
\begin{proposition}\label{prop:unique_sequence_b}
Let $l$ be a non-negative even integer and $s$ a non-negative integer such that $0\leq s\leq l$.
We assume that $2s+1$ is prime to $l+1$.
Then, there exists a unique $\pm1$ sequence $(b_n)_{1\leq n\leq l+1}$ $(b_n\in\{+1,-1\})$ satisfying the following conditions$:$
\begin{enumerate}
\item\label{it:seq_b1} $b_1=1$,
\item\label{it:seq_b2} $b_n=-b_{l+1-n}$ holds for $n\not\equiv 0\pmod{l+1}$,
\item\label{it:seq_b3} $b_{s-n}=b_{s+1+n}$ holds for all $n$.
\end{enumerate}
Here, $b_n$ is extended to any integer $n$, where if $n\equiv m\pmod{l+1}$, then $b_n=b_m$.
\end{proposition}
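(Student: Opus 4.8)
Set $t\coloneqq 2s+1$, and read the two defining relations as statements about the periodic extension of $(b_n)$ to $\Z$. The plan is to collapse both relations into a single one-step recurrence and then argue exactly as in the billiard propagation behind \cref{prop:unique_sequence_a}. First I would rewrite condition \eqref{it:seq_b3} as $b_i=b_{t-i}$ for every integer $i$ (the two indices $s-n$ and $s+1+n$ always sum to $t$), and condition \eqref{it:seq_b2} as $b_i=-b_{-i}$ for $i\not\equiv 0\pmod{l+1}$. Composing these, for $n\not\equiv 0\pmod{l+1}$ one obtains
\[
b_{n+t}=b_{t-(n+t)}=b_{-n}=-b_n,
\]
while at the excluded index condition \eqref{it:seq_b3} alone gives $b_t=b_0$. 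Thus the entire sequence is governed by stepping through the residues by $+t$, flipping sign at every step except the single step issuing from $n\equiv 0\pmod{l+1}$.

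For uniqueness I would invoke $\gcd(t,l+1)=1$: the orbit $0,t,2t,\dots$ runs through all of $\Z/(l+1)\Z$, so the recurrence determines every $b_n$ from one value, and the normalization $b_1=1$ pins it down completely. This is the analogue of the statement in \cref{lem:billiards} that the billiard path visits each lattice point exactly once.

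The hard part will be \emph{existence}, i.e.\ checking that this forced assignment is consistent rather than self-contradictory. Traversing the cycle of length $l+1$ once, there are exactly $l$ sign-flipping steps and one non-flipping step (the step leaving $n\equiv 0$, where \eqref{it:seq_b2} is deliberately dropped), so the accumulated sign is $(-1)^l=+1$ precisely because $l$ is even; this is the parity cancellation that makes the construction close up. I would make this rigorous by writing the closed formula $b_n=b_0\cdot(-1)^{k-1}$, where $k\equiv n\,t^{-1}\pmod{l+1}$ with $1\le k\le l$ (and $b_n=b_0$ when $n\equiv 0$), choosing $b_0$ so that $b_1=1$, and then verifying directly that this $b$ satisfies \eqref{it:seq_b2} and \eqref{it:seq_b3} --- not merely the derived recurrence. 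Those verifications reduce to the identities $k(-n)=(l+1)-k(n)$ and $k(t-n)=1-k(n)$ in $\Z/(l+1)\Z$, each combined with $(-1)^l=1$. The genuine subtlety to flag is that naive sign-equivariance under the full dihedral group generated by the two reflections would force the obstruction $(-1)^{l+1}=-1$; it is exactly the omission of the relation at $n\equiv 0$ that deletes one flip and rescues consistency, so the evenness of $l$ and the exclusion in \eqref{it:seq_b2} are both indispensable.
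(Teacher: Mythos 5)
Your proof is correct, but it takes a genuinely different route from the paper's. The paper never argues inside $\Z/(l+1)\Z$ directly: it picks, via Dirichlet's theorem, a prime $p\geq l+3$ with $p\equiv 1\pmod{4}$ and $(p-1)/2\equiv s\pmod{l+1}$ (this is where $\gcd(2s+1,l+1)=1$ enters there), shows that the truncation $(a_{p,l}(n))_{1\leq n\leq l+1}$ of the sequence from \cref{prop:unique_sequence_a} satisfies conditions (1)--(3), and obtains uniqueness by periodically extending any solution $(b_n)$ to $1\leq n\leq p-1$ and invoking the uniqueness of $(a_{p,l}(n))$. Your argument is instead self-contained and elementary: you collapse the two symmetries into the single step $b_{n+t}=-b_n$ for $n\not\equiv 0\pmod{l+1}$ together with $b_t=b_0$, use $\gcd(t,l+1)=1$ to make the orbit of $0$ under translation by $t$ a single $(l+1)$-cycle, note that the cycle closes up consistently because exactly $l$ (an even number) of the $l+1$ steps flip sign, and then verify the resulting closed formula against the original conditions (2) and (3) rather than only the derived recurrence --- a point you rightly flag as the genuine subtlety, and your reductions $k(-n)=(l+1)-k(n)$ and $k(t-n)=1-k(n)$ combined with $(-1)^l=1$ do check out. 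What your route buys: no appeal to Dirichlet or to \cref{lem:billiards}, an explicit formula for $b_{l,s}(n)$, and a transparent accounting of why each hypothesis (evenness of $l$, coprimality, the exclusion at $n\equiv 0$) is indispensable. What the paper's route buys: its existence argument is simultaneously the bridge $a_{p,l}(n)=b_{l,s}(n)$ for $(p-1)/2\equiv s\pmod{l+1}$, which is exactly the statement used in \cref{sec:proof}; if one adopted your proof, that identification would still require the short verification that $(a_{p,l}(n))_{1\leq n\leq l+1}$ satisfies (1)--(3), now concluded via your uniqueness, so your argument complements the paper's architecture rather than shortening it.
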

\begin{proof}
Take a prime number $p\geq l+3$ such that $p\equiv 1\pmod{4}$ and 
\[
\frac{p-1}{2}\equiv s\pmod{l+1}.
\]
Since $\mathrm{gcd}(2s+1,l+1)=1$, such a prime exists by Dirichlet's theorem on arithmetic progressions.
Then, we can check that the subsequence $(a_{p,l}(n))_{1\leq n\leq l+1}$ satisfies the conditions of $(b_n)_{1\leq n\leq l+1}$.
For the condition \eqref{it:seq_b3}, take integers $m$ and $t$ such that $1\leq m, t\leq l+1$, $s+1+n\equiv m\pmod{l+1}$, and $p-m\equiv t \pmod{l+1}$ hold.
Since $p-m\equiv s-n\pmod{l+1}$, we calculate
\[
b_{s+1+n}=b_m=a_{p,l}(m)=a_{p,l}(p-m)=a_{p,l}(t)=b_t=b_{s-n}.
\]
The uniqueness of $(b_n)_{1\leq n\leq l+1}$ follows from that of $(a_{p,l}(n))_{1\leq n\leq p-1}$, because the extended sequence $(b_n)_{1\leq n\leq p-1}$ satisfies all the conditions for $(a_{p,l}(n))_{1\leq n\leq p-1}$.
\end{proof}
\begin{definition}
For each $l$ and $s$, $(b_{l,s}(n))_{1\leq n\leq l+1}$ denotes the uniquely existing sequence in \cref{prop:unique_sequence_b}.
Furthermore, we extend the definition of $b_{l,s}(n)$ to any integer $n$, such that if $n\equiv m\pmod{l+1}$, then $b_{l,s}(n)=b_{l,s}(m)$.
\end{definition}
From the proof of \cref{prop:unique_sequence_b}, we see that $a_{p,l}(n)=b_{l,s}(n)$ holds for any $1\leq n\leq p-1$ when $(p-1)/2\equiv s\pmod{l+1}$.
\begin{lemma}\label{lem:sym_b1}
Let $l$ and $s$ be integers as in $\cref{prop:unique_sequence_b}$, and assume that $l\geq 2$.
Then, for all integers $n$, we have
\[
b_{l,s}(n)=\begin{cases}b_{l,l-s}(n) & \text{ if } n\not\equiv 0\pmod{l+1}, \\ -b_{l,l-s}(n) & \text{ if } n\equiv 0\pmod{l+1}.\end{cases}
\]
\end{lemma}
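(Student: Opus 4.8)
The plan is to prove the identity by invoking the uniqueness part of \cref{prop:unique_sequence_b}. First I would check that the parameter $l-s$ is admissible: since $2(l-s)+1 = 2(l+1)-(2s+1)\equiv -(2s+1)\pmod{l+1}$, we have $\gcd(2(l-s)+1,\,l+1)=\gcd(2s+1,\,l+1)=1$, and clearly $0\leq l-s\leq l$, so $b_{l,l-s}$ is well-defined. Writing $d_n\coloneqq b_{l,l-s}(n)$, I would introduce the candidate sequence defined by $c_n\coloneqq d_n$ when $n\not\equiv 0\pmod{l+1}$ and $c_n\coloneqq -d_n$ when $n\equiv 0\pmod{l+1}$. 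This is exactly the right-hand side of the asserted identity; it takes values in $\{+1,-1\}$ and is periodic of period $l+1$. The goal is then to verify that $c$ satisfies conditions \eqref{it:seq_b1}--\eqref{it:seq_b3} for the parameter $s$, after which uniqueness forces $c_n=b_{l,s}(n)$ for $1\leq n\leq l+1$, and periodicity upgrades the equality to all integers $n$.

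Before checking the conditions, I would record two symmetries of $d$ in convenient form. Condition \eqref{it:seq_b2} for $b_{l,l-s}$ reads $d_n=-d_{l+1-n}$ for $n\not\equiv 0$, which by periodicity is the \emph{oddness} $d_{-n}=-d_n$ for $n\not\equiv 0\pmod{l+1}$. Condition \eqref{it:seq_b3} for $b_{l,l-s}$ reads $d_{(l-s)-n}=d_{(l-s)+1+n}$, i.e.\ the \emph{reflection} $d_j=d_{2(l-s)+1-j}$ for all $j$. Conditions \eqref{it:seq_b1} and \eqref{it:seq_b2} for $c$ are then immediate: $c_1=d_1=1$ since $1\not\equiv 0$ (as $l+1\geq 3$), and for $n\not\equiv 0$ both $n$ and $l+1-n$ avoid $0\pmod{l+1}$, so $c_n=d_n=-d_{l+1-n}=-c_{l+1-n}$.

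The substance is condition \eqref{it:seq_b3}, namely $c_{s-n}=c_{s+1+n}$ for all $n$. In the generic case where neither $s-n$ nor $s+1+n$ is $\equiv 0\pmod{l+1}$, I would establish $d_{s-n}=d_{s+1+n}$ by composing the two symmetries: applying oddness and then reflection gives $d_{s+1+n}=-d_{-(s+1+n)}=-d_{2(l-s)+1+(s+1+n)}=-d_{n-s}$, where the last step uses $2l+2\equiv 0\pmod{l+1}$, and a final application of oddness turns $-d_{n-s}$ into $d_{s-n}$. The main obstacle is the boundary behaviour at multiples of $l+1$, where $c$ and $d$ differ by a sign. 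Here I would first observe that $s-n\equiv 0$ and $s+1+n\equiv 0$ cannot hold simultaneously, since their sum $2s+1$ is coprime to $l+1\geq 3$, hence nonzero modulo $l+1$. The two remaining subcases both reduce to the single identity $d_{2s+1}=-d_{l+1}$, which I would extract by setting $n=l-s$ in the reflection form of \eqref{it:seq_b3} (giving $d_0=d_{2(l-s)+1}=d_{-2s-1}$) and then applying oddness together with $d_0=d_{l+1}$.

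Granting the identity $d_{2s+1}=-d_{l+1}$, each boundary subcase matches the flipped value against the unflipped one: if $s-n\equiv 0$ then $c_{s-n}=-d_{l+1}$ while $c_{s+1+n}=d_{2s+1}$, and if $s+1+n\equiv 0$ then $c_{s+1+n}=-d_{l+1}$ while $c_{s-n}=d_{2s+1}$, so in both cases equality holds. This completes the verification of condition \eqref{it:seq_b3} for $c$, and uniqueness in \cref{prop:unique_sequence_b} then yields the lemma. I expect the only delicate point to be bookkeeping the sign flips at the indices $n\equiv 0\pmod{l+1}$; the reduction of both boundary subcases to the single relation $d_{2s+1}=-d_{l+1}$ is what makes this tractable.
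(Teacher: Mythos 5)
Your proposal is correct and takes essentially the same approach as the paper: both arguments verify that the sign-flipped sequence satisfies the three defining conditions of \cref{prop:unique_sequence_b} --- with the same case analysis on whether the indices $s-n$, $s+1+n$ are divisible by $l+1$, handled via the oddness condition, the reflection condition, and periodicity --- and then invoke uniqueness. The only cosmetic difference is the direction (you flip $b_{l,l-s}$ and match it against the conditions for parameter $s$, while the paper flips $b_{l,s}$ and matches it against the conditions for parameter $l-s$); since sign-flipping is an involution, these are the same argument, and your write-up is in fact somewhat more explicit about the admissibility of the parameter $l-s$ and the boundary identity $d_{2s+1}=-d_{l+1}$.
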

\begin{proof}
If both $s-n$ and $s+1+n$ are not divisible by $l+1$, then
\[
b_{l,s}(l-s-n)=-b_{l,s}(s+1+n)=-b_{l,s}(s-n)=b_{l,s}(l-s+1+n).
\]
If $s\equiv n\pmod{l+1}$, then $s+1+n$ is not divisible by $l+1$ and 
\[
b_{l,s}(l-s-n)=-b_{l,s}(s+1+n)=-b_{l,s}(s-n)=-b_{l,s}(l-s+1+n).
\]
If $s+1+n\equiv 0\pmod{l+1}$, then $s-n$ is not divisible by $l+1$ and 
\[
b_{l,s}(l-s-n)=b_{l,s}(s+1+n)=b_{l,s}(s-n)=-b_{l,s}(l-s+1+n).
\]
By these observations and the uniqueness of $(b_{l,l-s}(n))_{1\leq n\leq l+1}$, we obtain the conclusion.
\end{proof}
\begin{lemma}\label{lem:sym_b2}
Let $l$ and $s$ be integers as in $\cref{prop:unique_sequence_b}$, and assume that $l\geq 2$.
Let $n$ be an integer.
\begin{enumerate}
\item\label{it:sym_b1} If both $n$ and $n+2s+1$ are not divisible by $l+1$, then $b_{l,s}(n)=-b_{l,s}(n+2s+1)$.
\item\label{it:sym_b2} Consider the case where $s<l/2$. Set $t\coloneqq l/2-s$. If both $n$ and $n+2t$ are not divisible by $l+1$, then $b_{l,s}(n)=-b_{l,s}(n+2t)$.
\item\label{it:sym_b3} Consider the case where $s<l/2$. Set $t\coloneqq l/2-s$. If both $t-n$ and $t+n$ are not divisible by $l+1$, then $b_{l,s}(t-n)=b_{l,s}(t+n)$.
\end{enumerate}
\end{lemma}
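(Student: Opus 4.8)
The plan is to reduce all three identities to compositions of the two fundamental symmetries of $b_{l,s}$ recorded in \cref{prop:unique_sequence_b}: the sign-preserving reflection coming from condition \eqref{it:seq_b3}, which in the form $b_{l,s}(2s+1-x)=b_{l,s}(x)$ says that $b_{l,s}$ is symmetric about the half-integer $s+\tfrac12$, and the sign-reversing reflection coming from condition \eqref{it:seq_b2}, namely $b_{l,s}(l+1-x)=-b_{l,s}(x)$ for $x\not\equiv 0\pmod{l+1}$, which says $b_{l,s}$ is anti-symmetric about $(l+1)/2$; both are used together with the period $l+1$. As a preliminary I would first establish the anti-reflection about the origin: applying condition \eqref{it:seq_b2} to $-n$ and then periodicity gives $b_{l,s}(-n)=-b_{l,s}(l+1+n)=-b_{l,s}(n)$ whenever $n\not\equiv 0\pmod{l+1}$.

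For \eqref{it:sym_b1}, I would apply the reflection $b_{l,s}(2s+1-x)=b_{l,s}(x)$ at $x=-n$ to get $b_{l,s}(n+2s+1)=b_{l,s}(-n)$, and then invoke the origin anti-reflection to obtain $b_{l,s}(n+2s+1)=-b_{l,s}(n)$. For \eqref{it:sym_b2} I would instead compose the two reflections of \cref{prop:unique_sequence_b}: their centers $s+\tfrac12$ and $(l+1)/2$ lie at distance $t=l/2-s$, so the composite is the translation by $2t=l-2s$ carrying a single sign change. Concretely, using condition \eqref{it:seq_b3} and then condition \eqref{it:seq_b2}, one rewrites $b_{l,s}(m-2t)=b_{l,s}(2s+1-(l+1-m))=b_{l,s}(l+1-m)=-b_{l,s}(m)$ for $m\not\equiv 0$, and setting $m=n+2t$ yields $b_{l,s}(n)=-b_{l,s}(n+2t)$.

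Finally, for \eqref{it:sym_b3} I would feed \eqref{it:sym_b2} back into the origin anti-reflection: writing the anti-periodicity of \eqref{it:sym_b2} (with $n=-x$) as $b_{l,s}(2t-x)=-b_{l,s}(-x)$ and then using $b_{l,s}(-x)=-b_{l,s}(x)$ produces the sign-preserving reflection $b_{l,s}(2t-x)=b_{l,s}(x)$ about $t$; taking $x=t+n$ gives $b_{l,s}(t-n)=b_{l,s}(t+n)$. The only real obstacle is bookkeeping the exclusions: condition \eqref{it:seq_b2} is valid only away from multiples of $l+1$ (there it would force the value $0$), so at each step I must verify that every intermediate argument avoids $0\bmod(l+1)$. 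The hypotheses in each case --- that $n$ and $n+2s+1$, respectively $n$ and $n+2t$, respectively $t-n$ and $t+n$, are not divisible by $l+1$ --- are precisely what guarantees this, and checking that they are exactly sufficient (rather than some stronger condition) is the delicate point; the reflection algebra itself is routine.
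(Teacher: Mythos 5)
Your proof is correct, and it takes a route that is genuinely leaner than the paper's in its key step. The paper proves part \eqref{it:sym_b1} by passing through \cref{lem:sym_b1}: it writes $b_{l,s}(n)=b_{l,l-s}(n)$, applies the reflection of condition \eqref{it:seq_b3} for the \emph{companion} sequence $b_{l,l-s}$ (centered at $l-s+\tfrac12$), applies condition \eqref{it:seq_b2}, and then returns via \cref{lem:sym_b1} again; this consumes both hypotheses $n\not\equiv 0$ and $n+2s+1\not\equiv 0\pmod{l+1}$. You instead stay entirely inside the single sequence $b_{l,s}$, composing the reflection about $s+\tfrac12$ (condition \eqref{it:seq_b3}, which holds with no exclusions) with the anti-reflection about the origin (condition \eqref{it:seq_b2} plus periodicity); this is shorter, makes \cref{lem:sym_b1} unnecessary for this lemma, and in fact needs only $n\not\equiv 0\pmod{l+1}$. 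Similarly, your part \eqref{it:sym_b2} is an independent composition of the two defining reflections rather than a corollary of part \eqref{it:sym_b1} as in the paper --- though the two derivations encode the same translation-by-$2t$ idea, since $2t+2s+1=l+1$ --- and your part \eqref{it:sym_b3} is essentially the paper's step from \eqref{it:sym_b2}. What each approach buys: the paper's detour reuses machinery (\cref{lem:sym_b1}) that it needs elsewhere anyway (e.g., to reduce to $s<l/2$ in Section 4), while yours is self-contained and exposes that parts \eqref{it:sym_b1} and \eqref{it:sym_b2} hold under weaker hypotheses, with only one of the two stated non-divisibility conditions actually used. One remark on your closing worry: the bookkeeping is not delicate in the direction you fear --- your intermediate arguments are covered by the stated hypotheses with room to spare in \eqref{it:sym_b1} and \eqref{it:sym_b2}, and only in \eqref{it:sym_b3} are both hypotheses genuinely needed (one to apply \eqref{it:sym_b2} at the shifted argument, one for the origin anti-reflection).
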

\begin{proof}
For \eqref{it:sym_b1}, using \cref{lem:sym_b1}, we calculate
\begin{align*}
b_{l,s}(n)&=b_{l,l-s}(n)=b_{l,l-s}(l-s-(l-s-n))\\
&=b_{l,l-s}(l-s+1+(l-s-n))\\
&=b_{l,l-s}(l+1-(n+2s+1))\\
&=-b_{l,l-s}(n+2s+1)\\
&=-b_{l,s}(n+2s+1).
\end{align*}
For \eqref{it:sym_b2}, using \eqref{it:sym_b1}, we have
\[
b_{l,s}(n+2t)=-b_{l,s}(n+2t+2s+1)=-b_{l,s}(n+l+1)=-b_{l,s}(n).
\]
For \eqref{it:sym_b3}, using \eqref{it:sym_b2}, we have
\[
b_{l,s}(t-n)=-b_{l,s}(l+1+n-t)=-b_{l,s}(n-t)=b_{l,s}(n-t+2t)=b_{l,s}(n+t).\qedhere
\]
\end{proof}
% --------------------------------------------------------------------------

% --------------------------------------------------------------------------
\section{Proof of $l_{\mathrm{L}}<l_{\mathrm{R}}$}\label{sec:proof}
Let $p$ be a prime number such that $p\equiv 1 \pmod{4}$.
Since the Legendre symbol has the symmetry $\left(\frac{n}{p}\right)=\left(\frac{p-n}{p}\right)$, in order to prove $l_{\mathrm{L}}<l_{\mathrm{R}}$, by \cref{thm:empty_iff}, it is sufficient to show that
\[
\left(\left(\frac{n}{p}\right)\right)_{1\leq n\leq p-1} \neq (a_{p,l}(n))_{1\leq n\leq p-1} \quad (\text{as sequences})
\]
for all even integers $0\leq l\leq p-3$.

In the following, we assume that $l$ is an even integer such that $0\leq l\leq p-3$ and that $p\geq 13$.
Moreover, we assume that
\[
\left(\left(\frac{n}{p}\right)\right)_{1\leq n\leq p-1} = (a_{p,l}(n))_{1\leq n\leq p-1} \quad (\text{as sequences}).
\]
If $l=0$, then $a_{p,0}(n)=1$ for all $1\leq n\leq p-1$, which contradicts the fact that the half of non-zero residues modulo $p$ are not quadratic residues.
Therefore, we may assume that $l\geq 2$.
Let $s$ be the unique integer satisfying $(p-1)/2\equiv s\pmod{l+1}$, with $0\leq s\leq l$.
Then, under our assumption, we have
\[
\left(\frac{n}{p}\right)=b_{l,s}(n)\quad \text{ for all } 1\leq n\leq p-1.
\]
Other than in a few exceptional cases, a contradiction can be derived from the information for $1\leq n\leq l$ alone.
By \cref{lem:sym_b1}, we may assume that $s<l/2$. (Since $2\cdot(l/2)+1=l+1$, $s=l/2$ is impossible.)

\

\noindent \textbf{The case I}: Assume that $p\equiv 1\pmod{8}$ and $s<l/4$.
Then, $4s+2\leq l$.
By \eqref{it:sym_b1} of \cref{lem:sym_b2}, we have
\[
b_{l,s}(2s+1)=-b_{l,s}(4s+2).
\]
Therefore, by our assumption, we have
\[
\left(\frac{2s+1}{p}\right)=-\left(\frac{4s+2}{p}\right)=-\left(\frac{2}{p}\right)\left(\frac{2s+1}{p}\right)
\]
or $\left(\frac{2}{p}\right)=-1$, which contradicts the second supplement to tha law of quadratic reciprocity.

\

\noindent \textbf{The case II}: Assume that $p\equiv 1\pmod{8}$ and $l/4\leq s<l/2$.
Set $t\coloneqq l/2-s$.
Then, $0<2t<4t\leq l$.
By \eqref{it:sym_b2} of \cref{lem:sym_b2}, we have
\[
b_{l,s}(2t)=-b_{l,s}(4t).
\]
Therefore, by our assumption, we have
\[
\left(\frac{2t}{p}\right)=-\left(\frac{4t}{p}\right)=-\left(\frac{2}{p}\right)\left(\frac{2t}{p}\right)
\]
or $\left(\frac{2}{p}\right)=-1$.

\

\noindent \textbf{The case III}: Assume that $p\equiv 5\pmod{8}$ and $s<l/4$.
Then, $4s+2\leq l$.
If $s=0$, then we can easily check that
\[
(b_{l,0}(n))_{1\leq n\leq l+1}=(+1,-1,+1,-1,\dots,+1,-1,+1).
\]
So, if $l\geq 4$, then $b_{l,0}(4)=-1\neq \left(\frac{4}{p}\right)$, which is a contradiction.
If $l=2$, then 
\[
(b_{2,0}(n))_{1\leq n\leq p-1}=(+1,-1,+1,+1,-1,+1,\dots),
\]
in particular, $b_{2,0}(2)b_{2,0}(3)=-b_{2,0}(6)$ holds, and this contradicts $\left(\frac{2}{p}\right)\left(\frac{3}{p}\right)=\left(\frac{6}{p}\right)$.
(Recall $p\geq 13$.)
Thus, we may assume that $s>0$.
Since
\[
b_{l,s}(2s)=b_{l,s}(s+1+(s-1))=b_{l,s}(s-(s-1))=b_{l,s}(1)=1,
\]
we have
\[
b_{l,s}(s)=\left(\frac{s}{p}\right)=\left(\frac{2}{p}\right)\left(\frac{2s}{p}\right)=-b_{l,s}(2s)=-1.
\]
Hence, by \eqref{it:sym_b1} of \cref{lem:sym_b2}, we have
\begin{align*}
b_{l,s}(2s-1)&=-b_{l,s}(2s-1+(2s+1))=-b_{l,s}(4s)\\
&=-\left(\frac{4s}{p}\right)=-\left(\frac{4}{p}\right)\left(\frac{s}{p}\right)\\
&=-b_{l,s}(s)=1.
\end{align*}
On the other hand, 
\[
b_{l,s}(2s-1)=b_{l,s}(s+1+(s-2))=b_{l,s}(s-(s-2))=b_{l,s}(2)=\left(\frac{2}{p}\right)=-1.
\]
Therefore, we arrive at a contradiction.

\

\noindent \textbf{The case IV}: Assume that $p\equiv 5\pmod{8}$ and $l/4\leq s<l/2$.
Set $t\coloneqq l/2-s$.
Then, $0<t< 4t\leq l$.
If $t=1$, then we can easily check that
\begin{align*}
&(b_{l,l/2-1}(n))_{1\leq n\leq l+1}\\
&=\begin{cases} (\underline{+1,+1,-1,-1},\dots,\underline{+1,+1,-1,-1},-1) & \text{ if } l\equiv 0\pmod{4}, \\ (+1,\underline{-1,-1,+1,+1},\dots,\underline{-1,-1,+1,+1},-1,+1) & \text{ if } l\equiv 2\pmod{4}.\end{cases}
\end{align*}
Here, the underlined parts indicate the repeated patterns.
For the case where $l\equiv 0\pmod{4}$, $b_{l,l/2-1}(2)=+1$ contradicts the second supplement.
For the case where $l\equiv 2\pmod{4}$, we have $b_{l,l/2-1}(2)b_{l,l/2-1}(3)=-b_{l,l/2-1}(6)$, and this contradicts $\left(\frac{2}{p}\right)\left(\frac{3}{p}\right)=\left(\frac{6}{p}\right)$.
(Note that $l\geq 4t=4$.)
If $t=2$ and $l\geq 10$, then by \eqref{it:sym_b2} of \cref{lem:sym_b2}, we have
\[
b_{l,l/2-2}(10)=-b_{l,l/2-2}(6)=b_{l,l/2-2}(2)=\left(\frac{2}{p}\right)=-1
\]
and $b_{l,l/2-2}(5)=-b_{l,l/2-2}(1)=-1$.
These lead to $\left(\frac{2}{p}\right)\left(\frac{5}{p}\right)\neq \left(\frac{10}{p}\right)$.
We can easily check that
\[
(b_{8,2}(n))_{1\leq n\leq 9}=(+1,+1,+1,+1,-1,-1,-1,-1,-1),
\]
which leads to $\left(\frac{2}{p}\right)=+1$. (Note that $l\geq 4t=8$.)
Thus, we may assume that $t>2$.
By \eqref{it:sym_b3} of \cref{lem:sym_b2}, we have
\[
b_{l,s}(2t-2)=b_{l,s}(t-(2-t))=b_{l,s}(t+(2-t))=b_{l,s}(2)=\left(\frac{2}{p}\right)=-1
\]
and
\[
\left(\frac{t-1}{p}\right)=\left(\frac{2}{p}\right)\left(\frac{2t-2}{p}\right)=-b_{l,s}(2t-2)=1.
\]
Therefore, 
\[
b_{l,s}(4t-4)=\left(\frac{4t-4}{p}\right)=\left(\frac{4}{p}\right)\left(\frac{t-1}{p}\right)=1.
\]
On the other hand, by \eqref{it:sym_b2} and \eqref{it:sym_b3} of \cref{lem:sym_b2},
\begin{align*}
b_{l,s}(4t-4)&=b_{l,s}(2t-4+2t)=-b_{l,s}(2t-4)\\
&=-b_{l,s}(t+(t-4))=-b_{l,s}(t-(t-4))\\
&=-b_{l,s}(4)=-\left(\frac{4}{p}\right)=-1,
\end{align*}
which leads to a contradiction.

Since contradictions arose in all cases, the proof of the main theorem has been completed. Q.E.D.
\begin{remark}
In the proof for the case $l\geq 2$, the fact that the Legendre symbol is a completely multiplicative function is not fully utilized, and only its multiplicativity within a limited range is used.
Neither is the proof critically dependent on any other properties of the Legendre symbol.
In fact, the second supplement to the law of quadratic reciprocity is only used for the case analysis to determine whether $a_{p,l}(2)$ is $+1$ or $-1$.
Therefore, we have actually obtained the following theorem concerning the properties of the sequence $(a_{p,l}(n))_{1\leq n\leq p-1}$.
\end{remark}
\begin{theorem}
Let $p\geq 13$ be a prime number such that $p\equiv 1 \pmod{4}$.
Let $l$ be an even integer such that $2\leq l \leq p-3$.
Then there exists a positive integer $m$ such that $2m\leq p-3$ and $a_{p,l}(2m) \neq a_{p,l}(2)a_{p,l}(m)$.
\end{theorem}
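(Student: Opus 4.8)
The plan is to recover, in purely combinatorial form, the argument of Section~\ref{sec:proof}. I would argue by contradiction, assuming the negation of the conclusion, namely that
\[
a_{p,l}(2m)=a_{p,l}(2)\,a_{p,l}(m)\qquad\text{for every }m\text{ with }2m\le p-3,
\]
and abbreviating $\epsilon\coloneqq a_{p,l}(2)\in\{+1,-1\}$. Applying this relation twice yields $a_{p,l}(4m)=a_{p,l}(m)$ whenever $4m\le p-3$, and these two consequences are the only ``multiplicative'' inputs used below. The conceptual point behind the reformulation is that $\epsilon$ is now read off the sequence itself rather than supplied by number theory: in Section~\ref{sec:proof} the sole genuinely arithmetic ingredient was the second supplement to quadratic reciprocity, whose only role there was to identify $\left(\frac{2}{p}\right)$ with $a_{p,l}(2)$. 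Here that identification is unnecessary, since $\epsilon$ is intrinsic.

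First I would translate the hypothesis into the sequence $b_{l,s}$: choosing $s$ with $0\le s\le l$ and $(p-1)/2\equiv s\pmod{l+1}$, we have $a_{p,l}(n)=b_{l,s}(n)$ for $1\le n\le p-1$. Using \cref{lem:sym_b1}, which identifies $b_{l,s}$ with $b_{l,l-s}$ away from multiples of $l+1$, I would reduce to $s<l/2$; the value $s=l/2$ cannot occur because $2(l/2)+1=l+1$ is not coprime to $l+1$. Every index $m,2m,4m$ occurring in the argument will lie in $[1,l]\subseteq[1,p-3]$ and avoid multiples of $l+1$, so that both the standing hypothesis and \cref{lem:sym_b2} apply to them.

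The heart of the proof is then the same four-case analysis as in Section~\ref{sec:proof}, split according to the sign of $\epsilon$ and according to whether $s<l/4$ or $l/4\le s<l/2$ (writing $t\coloneqq l/2-s$ in the latter range). When $\epsilon=+1$ both ranges are immediate: for $s<l/4$, \eqref{it:sym_b1} of \cref{lem:sym_b2} gives $b_{l,s}(4s+2)=-b_{l,s}(2s+1)$, whereas the hypothesis at $m=2s+1$ forces $b_{l,s}(4s+2)=b_{l,s}(2s+1)$; for $l/4\le s<l/2$ the identical clash arises at $m=2t$ via \eqref{it:sym_b2} of \cref{lem:sym_b2}. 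When $\epsilon=-1$ and $s<l/4$, I would first dispatch $s=0$ (the alternating sequence, with violating index $m=2$ if $l\ge 4$ and $m=3$ if $l=2$), and for $s\ge 2$ chain \eqref{it:sym_b1} of \cref{lem:sym_b2}, condition \eqref{it:seq_b3}, and the hypothesis at $m=s$ to force $b_{l,s}(2s-1)$ to equal both $+1$ and $-1$. When $\epsilon=-1$ and $l/4\le s<l/2$, the generic range $t\ge 3$ is handled by chaining \eqref{it:sym_b2} and \eqref{it:sym_b3} of \cref{lem:sym_b2} to force $b_{l,s}(4t-4)$ to take both signs, while the small values $t=1$ and $t=2$ are settled by direct inspection of the explicit short sequences, exactly as in Section~\ref{sec:proof} (producing violating indices such as $m=3$ and $m=5$).

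I expect the difficulty to be bookkeeping rather than new ideas. The main obstacle is twofold: one must check in each branch that every index invoked stays in the range $1\le \cdot \le p-3$ and is not a multiple of $l+1$, so that \cref{lem:sym_b2} applies and the standing hypothesis is available; and one must treat the degenerate small cases $s=0$, $l=2$, $t=1$, $t=2$ by direct inspection of the finite sequences, since there the generic symmetry relations collapse. The hypothesis $p\ge 13$ is precisely what secures enough room (for instance $2m\le p-3$ at the indices $m=3,5$) in these degenerate branches.
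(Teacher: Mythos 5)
Your proposal is correct and follows essentially the same route as the paper: the paper's proof of this theorem \emph{is} the observation, recorded in the remark preceding it, that the \cref{sec:proof} argument uses the Legendre symbol only through the relations $\left(\frac{2m}{p}\right)=\left(\frac{2}{p}\right)\left(\frac{m}{p}\right)$ with $2m\le p-3$ and through the second supplement, whose sole role is to fix the sign of $\left(\frac{2}{p}\right)$, so that replacing the split on $p\bmod 8$ by the split on $\epsilon=a_{p,l}(2)$ and rerunning the reduction to $s<l/2$ and the four cases (your cases are the paper's Cases I--IV verbatim) yields the contradiction. The one slip is the boundary of your $\epsilon=-1$, $s<l/4$ branch: you treat $s=0$ and ``$s\ge 2$'', omitting $s=1$, which does survive the coprimality and reduction constraints (e.g.\ $l=10$, $p=113$); it is settled by the same tools, since condition \eqref{it:seq_b3} of \cref{prop:unique_sequence_b} with $n=0$ gives $a_{p,l}(2)=b_{l,1}(2)=b_{l,1}(1)=+1$, contradicting $\epsilon=-1$ outright, so you should either write ``$s\ge 1$'' (as in the paper's Case III, whose chain also goes through at $s=1$) or record this vacuity.
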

% --------------------------------------------------------------------------

% --------------------------------------------------------------------------
\section{Computational results}\label{sec:computationla_results}
% --------------------------------------------------------------------------
In this section, we first present the details of the computations regarding the $N_k$ values. After we determined the exact $N_k$ values for $k$ up to $10^7$ (\cref{thm:main1}), we further explored the finiteness of $N_k$ using a sieving algorithm to obtain the upper bound of $N_k$ for $k$ up to $10^{14}$ (\cref{thm:main2}). These algorithms can be easily generalized to the case of arbitrary $l$, and the latter algorithm for general $(k,l)$ particularly inspired the discovery of our main result.
We also present the computational results that support \cref{conj:Num_Jp_over_p_half}.

\subsection{Determining exact values and upper bounds of $N_k$}\label{subsec:comput_results:k_gobel}
We extended the calculations of $N_k$, which was previously only listed up to $k\leq 61$ in \cite[A108394]{Sloane}, to obtain the following results:
\begin{theorem}\label{thm:main1}
For all $2\leq k\leq 10^7$, $N_k$ is explicitly determined $($see \cite{Kobayashi_github}$)$, and the largest $N_k$ in this range is $N_{2600725}=9011$.
Furthermore, $N_k$ is a prime number for $8649270$ $(86.49\%)$ of $9999999$ possible values of $k\leq 10^7$.
\end{theorem}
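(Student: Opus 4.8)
The plan is to recast the integrality question as a statement about $p$-adic valuations that can be tested by modular arithmetic, and then to exploit the empirical smallness of $N_k$. Writing $u_k(n)\coloneqq n\,g_{k,2}(n)$, the recurrence \eqref{eq:rec_of_(k,l)-Goebel} becomes $u_k(n+1)=u_k(n)+\bigl(u_k(n)/n\bigr)^k$ with $u_k(1)=2$; as long as $g_{k,2}(1),\dots,g_{k,2}(n)$ are integers, $u_k(n)$ is an integer and $g_{k,2}(n)\in\Z$ if and only if $n\mid u_k(n)$. Hence $N_k$ is the least $n\geq 2$ for which some prime $p\mid n$ satisfies $v_p(u_k(n))<v_p(n)$, where $v_p$ denotes the $p$-adic valuation. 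Since $u_k(n)$ is astronomically large, I would never form it as an integer; instead I would carry its residues modulo prime powers.

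Concretely, for each prime $p$ not exceeding the current index I would track $u_k(n)\bmod p^{c_p}$ together with the exact valuation of $g_{k,2}(n)$, updating by the recurrence: a multiplication by $n^{-1}\bmod p^{c_p}$ when $p\nmid n$, and an explicit $p$-adic division with valuation comparison at the multiples of $p$, where a drop of $v_p(u_k)$ below $v_p(n)$ flags non-integrality. For every prime $p>\sqrt{N_k}$ one has $v_p(n)=1$ for all admissible $n$, so tracking modulo $p$ alone suffices and the test at each multiple of $p$ reduces to a single nonvanishing check; only the finitely many small primes require higher powers. The precision $c_p$ needed to reach index $N_k$ is bounded by $v_p(N_k!)$, which is small precisely because $N_k$ is small.

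The decisive practical point is that across the range $N_k$ is reached quickly: the maximum over all $2\leq k\leq 10^7$ turns out to be only $N_{2600725}=9011$, so for the overwhelming majority of $k$ the iteration halts after a handful of steps at a small prime and the required precisions are negligible. To settle all $k$ at once one can further amortize: by Fermat's little theorem the trajectory of $u_k(n)\bmod p$ depends only on $k\bmod(p-1)$ (and modulo $p^{c}$ only on $k\bmod p^{c-1}(p-1)$), so the per-prime failure index can be precomputed on residue classes and merged by a sieve, which is the mechanism behind the upper-bound extension to $k\leq 10^{14}$ recorded below. Each determination $N_k=n_0$ then carries a short certificate for its upper bound, namely a prime $p\mid n_0$ and the residue $u_k(n_0)\bmod p^{v_p(n_0)}$ witnessing $v_p(u_k(n_0))<v_p(n_0)$; the tallies for the maximum and for the number of prime $N_k$ (here $8649270$ of $9999999$) are read off the archived output in \cite{Kobayashi_github}.

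The main obstacle lies entirely in verification rather than in new mathematics. First, the $p$-adic precision bookkeeping at multiples of the small primes must be performed without silently dropping a digit, since an off-by-one in $c_p$ would corrupt a valuation comparison; the bound $c_p\leq v_p(N_k!)$ must be honored for the rare $k$ with large $N_k$. Second, the phrase ``explicitly determined'' is legitimate only because the search provably terminates for every $k$ in range: for each $k\leq 10^7$ a genuine non-integral term is actually encountered (by index at most $9011$), so no value is left as a mere lower bound or as $\infty$. Once these two points are secured, the statistical assertions follow immediately from the tabulated values.
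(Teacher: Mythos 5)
Your proposal is correct and is essentially the paper's own method: the paper proves this theorem by running \cref{alg:gobel_integrality}, which likewise iterates the recurrence modulo prime powers with precision bounded by $\nu_p(n!)$ --- organized there as a single shrinking composite modulus $P=\prod_{p\mid n_{\mathrm{max}}}p^{\nu_p(n_{\mathrm{max}}!)}$ restarted for each candidate index, rather than your per-prime parallel tracking --- and flags non-integrality exactly when the numerator fails divisibility by $\gcd(d,n+1)$, with the specific numerical values (the maximum $N_{2600725}=9011$ and the prime count $8649270$) certified, as in your proposal, by the archived computation at \cite{Kobayashi_github}. The Fermat-little-theorem sieve you sketch is also precisely the paper's mechanism, though it is deployed only for the upper-bound result \cref{thm:main2} for $k\leq 10^{14}$, not for the exact determination here.
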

We computed the exact values of $N_k$ according to \cref{alg:gobel_integrality}, which will be explained in detail in \cref{subsec:algorithms}.

\begin{remark}
After the first version of our article had been posted to arXiv, a short Mathematica code for calculating $N_k$ by Buck, Motley, and Wagon was made publicly available at \cite[A108394]{Sloane}.
The exact values of $N_k$ for $k$ up to $10^5$ are also available at the same site as of writing of the current version of our article.
\end{remark}
We tabulate the first 360 terms of $N_k$ in \cref{tab:gobel_plain}.
The full data of computed $N_k$ values is available at \cite{Kobayashi_github}.

\begin{table}[]
    \centering
    \scriptsize
    \begin{tabular}{r|rrrrrrrrrrrrrrrrrr}
 & 0 & 1 & 2 & 3 & 4 & 5 & 6 & 7 & 8 & 9 & 10 & 11 & 12 & 13 & 14 & 15 & 16 & 17 \\
\hline
0 & $\infty$ & $\infty$ & 43 & 89 & 97 & 214 & 19 & 239 & 37 & 79 & 83 & 239 & 31 & 431 & 19 & 79 & 23 & 827 \\
18 & 43 & 173 & 31 & 103 & 94 & 73 & 19 & 243 & 141 & 101 & 53 & 811 & 47 & 1077 & 19 & 251 & 29 & 311 \\
36 & 134 & 71 & 23 & 86 & 43 & 47 & 19 & 419 & 31 & 191 & 83 & 337 & 59 & 1559 & 19 & 127 & 109 & 163 \\
54 & 67 & 353 & 83 & 191 & 83 & 107 & 19 & 503 & 29 & 191 & 47 & 83 & 51 & 1907 & 19 & 131 & 37 & 137 \\
72 & 31 & 214 & 31 & 127 & 47 & 443 & 19 & 173 & 31 & 227 & 23 & 337 & 83 & 563 & 19 & 47 & 166 & 487 \\
90 & 29 & 89 & 83 & 79 & 137 & 73 & 19 & 2039 & 62 & 218 & 59 & 127 & 31 & 81 & 19 & 239 & 37 & 71 \\
108 & 46 & 167 & 31 & 457 & 101 & 179 & 19 & 173 & 37 & 179 & 29 & 191 & 67 & 563 & 19 & 86 & 43 & 151 \\
126 & 23 & 101 & 43 & 81 & 59 & 139 & 19 & 47 & 31 & 249 & 46 & 101 & 83 & 647 & 19 & 179 & 25 & 103 \\
144 & 43 & 486 & 29 & 83 & 23 & 167 & 19 & 167 & 37 & 331 & 53 & 167 & 47 & 167 & 19 & 25 & 59 & 326 \\
162 & 31 & 191 & 31 & 79 & 43 & 73 & 19 & 479 & 23 & 79 & 47 & 359 & 29 & 359 & 19 & 71 & 37 & 47 \\
180 & 97 & 839 & 61 & 431 & 46 & 227 & 19 & 827 & 37 & 241 & 159 & 118 & 23 & 167 & 19 & 103 & 97 & 179 \\
198 & 47 & 131 & 31 & 127 & 29 & 254 & 19 & 251 & 46 & 137 & 43 & 331 & 79 & 479 & 19 & 239 & 23 & 163 \\
216 & 47 & 214 & 47 & 347 & 83 & 307 & 19 & 251 & 31 & 47 & 173 & 101 & 43 & 83 & 19 & 229 & 173 & 751 \\
234 & 113 & 191 & 23 & 101 & 53 & 73 & 19 & 1149 & 61 & 79 & 47 & 103 & 59 & 71 & 19 & 79 & 37 & 173 \\
252 & 31 & 191 & 31 & 251 & 83 & 201 & 19 & 233 & 31 & 499 & 47 & 313 & 47 & 359 & 19 & 89 & 46 & 139 \\
270 & 43 & 47 & 46 & 151 & 59 & 151 & 19 & 863 & 25 & 223 & 23 & 614 & 31 & 191 & 19 & 163 & 29 & 173 \\
288 & 53 & 431 & 31 & 81 & 43 & 311 & 19 & 179 & 37 & 103 & 101 & 129 & 113 & 1559 & 19 & 127 & 59 & 331 \\
306 & 34 & 227 & 47 & 179 & 47 & 73 & 19 & 227 & 29 & 158 & 47 & 47 & 46 & 179 & 19 & 79 & 37 & 167 \\
324 & 23 & 491 & 109 & 79 & 141 & 131 & 19 & 479 & 37 & 86 & 43 & 193 & 47 & 101 & 19 & 223 & 47 & 129 \\
342 & 29 & 137 & 31 & 311 & 23 & 103 & 19 & 563 & 31 & 169 & 47 & 127 & 34 & 89 & 19 & 337 & 37 & 167 \\
\vdots &&&&&&$\vdots$&&&&&&&$\vdots$&&&&& \\

    \end{tabular}
    \caption{Computed values of $N_k$ for $k$-G\"{o}bel sequences with $k$ up to 359. Each entry represents the value of $N_k$ for $k$ equal to the sum of the numbers at the top and left. Note that $N_k = 19$ for $k\equiv 6, 14 \pmod{18}$.}
    \label{tab:gobel_plain}
\end{table}

It appears that the typical values of $N_k$ are larger for $k\equiv 1\pmod{2}$ than for the other case, and $N_k$ seems to get even larger for $k\equiv 1\pmod{6}$.
Here we let $\overline{N}_{a\bmod{d}}$ denote the arithmetic mean of the exact values of $N_k$ with $2\leq k\leq 10^7$ and $k\equiv a\pmod{d}$.
We found that $\overline{N}_{0\bmod{2}} \approx 47.4$, $\overline{N}_{1\bmod{2}} \approx 263.2$, and $\overline{N}_{1\bmod{6}} \approx 383.1$.
Under the condition $k\equiv 1\pmod{6}$, we found almost no correlation between the values of $N_k$ and whether $k$ is a prime number.
When $N_k$ values are grouped by whether $k$ is a prime, the arithmetic mean of $N_k$ for $k\equiv 1\pmod{6}$ with $2\leq k\leq 10^7$ is approximately 382.2 for 332194 prime values of $k$ and 383.3 for 1334472 composite values of $k$.
From these numerical results, several questions naturally arise.
First, does $\overline{N}_{1\bmod{d}} \geq \overline{N}_{a\bmod{d}}$ hold for every $a$?
We found that it is not true: for integer $d$ with $2\leq d\leq 10^3$, there are only 204 values of $d$ such that $\overline{N}_{1\bmod{d}} = \max_{0\leq a<d}\overline{N}_{a\bmod{d}}$.
However, for the remaining 795 cases, we still have $\overline{N}_{1\bmod{d}} > 0.9486\times \max_{0\leq a<d}\overline{N}_{a\bmod{d}}$. Therefore, although the rigorous answer to the question is false, $\overline{N}_{1\bmod{d}}$ is still close to the maximum in many cases.
Second, does $\overline{N}_{1\bmod{a}} < \overline{N}_{1\bmod{b}}$ hold for every pair $(a,b)$ of integers greater than 1, with $a\neq b$ and $a\mid b$?
The answer is again negative in a strict sense: among the 5070 pairs of $(a,b)$ with $a,b\leq 10^3$ and satisfying the conditions above, the inequality in question is false in 201 cases.
Nevertheless, for the 201 cases where the inequality does not hold, it is still true that $0.9824\times\overline{N}_{1\bmod{a}} < \overline{N}_{1\bmod{b}}$.
Even when we add the condition that $b/a$ should be a prime number for a pair $(a,b)$, the inequality is false in just 159 cases among 1958 possible pairs of $(a,b)$.
It can be concluded that our second question seems true in an approximate sense.
The pairs $(a,b)$ for which $\overline{N}_{1\bmod{a}} > \overline{N}_{1\bmod{b}}$ with $a\mid b$ include $(8, 16)$, $(16, 32)$, $(24, 48)$, and $(243, 729)$.
Based on the hypothesis that a greater variety of prime factors in $d$ leads to a larger $N_k$ value, we calculated $\overline{N}_{1\bmod{p\#}}$ for prime $p$ up to 13 and obtained the following:
\[
\overline{N}_{1\bmod{3\#}} < \overline{N}_{1\bmod{5\#}} < \overline{N}_{1\bmod{7\#}} < \overline{N}_{1\bmod{11\#}} < \overline{N}_{1\bmod{13\#}} \approx 600.3,
\]
where $p\#$ denotes the primorial of $p$.
Within the range $2\leq k\leq 10^7$, we have only 333 values of $k$ satisfying $k\equiv 1 \pmod{13\#}$.
We additionally calculated $N_{c(p\#)+1}$ with $p$ prime up to 29 for $1\leq c\leq 3000$ by \cref{alg:gobel_integrality}.
Letting $\overline{\overline{N}}_{1\bmod{p\#}}$ denote the arithmetic mean of these 3000 terms, we obtained the following:
\[
\overline{\overline{N}}_{1\bmod{p\#}}<\overline{\overline{N}}_{1\bmod{q\#}}
\]
for any primes $p$ and $q$ with $2\leq p<q\leq 29$, and $\overline{\overline{N}}_{1\bmod{29\#}} \approx 753.5$.
We are currently unable to explain this phenomenon theoretically.
Nevertheless, it could be a topic of future research to explore the set of $k$ for which the value of $N_k$ is likely to be large.

We found that $N_k$ is a prime number for 8649270 (86.49\%) of 9999999 possible values of $k$ in this range.
In \cite[Section~3]{MatsuhiraMatsusakaTsuchida2024}, it is stated that $\{N_k\mid k\geq 2\}=\{19, 23, 31, 37, 43, \dots\}$ based on the data in \cite[A108394]{Sloane}, and they observed that $N_{142}=25$.
We also found $N_{306}=34$ and a wide variety of composite $N_k$ values: among the 621 values of $N_k \leq 1000$ for $2\leq k\leq 10^7$, only 160 are prime, while the other 461 are composite.
We note that 99 of these 461 composite numbers occur less than 10 times in nearly $10^7$ data, and 27 of them occur only once.
It is possible that additional composite numbers less than or equal to 1000, which did not occur in our data, may start to appear as we calculate more $N_k$ values.

\begin{table}[]
    \centering
\begin{tabular}{rrl|rrl|rrl}
\toprule
$k$ & $N_k$ & & $k$ & $N_k$ & & $k$ & $N_k$ & \\
\midrule
2 & 43 & prime & 49 & 1559 & prime & 107161 & 4463 & prime \\
3 & 89 & prime & 67 & 1907 & prime & 121801 & 5507 & prime \\
4 & 97 & prime & 97 & 2039 & prime & 707197 & 5879 & prime \\
5 & 214 & composite & 1441 & 2339 & prime & 832321 & 7127 & prime \\
7 & 239 & prime & 4189 & 2589 & composite & 1412161 & 7883 & prime \\
13 & 431 & prime & 5581 & 2687 & prime & 2600725 & 9011 & prime \\
17 & 827 & prime & 8209 & 2939 & prime & & & \\
31 & 1077 & composite & 13201 & 4139 & prime & & & \\
\bottomrule
\end{tabular}
    \caption{All values of $k$ with $2\leq k\leq 10^7$ such that $N_{k^\prime} < N_k$ holds for all $k^\prime$ satisfying $2\leq {k^\prime}<k$, along with the corresponding values of $N_k$ and whether $N_k$ is a prime number.}
    \label{tab:gobel_records}
\end{table}

In \cref{tab:gobel_records}, we present the ``new records'' of $N_k$ that have appeared as $k$ increments.
From the table, we can see that the growth of the largest $N_k$ value as $k$ increases is rather slow.
Additionally, we found $N_{1984\times 29\#+1}=10007$ in the exact determination of $\overline{\overline{N}}_{1\bmod{29\#}}$.
We note that $1984\times 29\# = 2^6\times 31\#$.

If we limit ourselves to the upper bound estimation of $N_k$, we can obtain further results regarding the non-integrality of the $k$-G\"{o}bel sequences.
Once we identify the values of $k$ for a given prime $p$ that result in non-integrality of the $k$-G\"{o}bel sequences,
we can sieve out the values of $k'$ that satisfy $k'\equiv k\pmod{p-1}$ by Fermat's little theorem.
If a range of integers are all sieved out by a finite number of congruence classes with guaranteed non-integrality, we can verify that $N_k$ is bounded for that range of $k$.
The following is concluded from our computation in this approach.
\begin{theorem}\label{thm:main2}
For all $2\leq k\leq 10^{14}$, $N_k\leq 29363$.
\end{theorem}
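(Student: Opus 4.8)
The plan is to turn the bound $N_k\le 29363$ into a finite covering problem solvable by computer: exhibit, for each $k$ in the range, a prime $p\le 29363$ together with a congruence certificate forcing $g_{k,2}(p)\notin\Z_{(p)}$, and then verify the covering by a segmented sieve.

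First I would use the reduction already observed above: for a fixed odd prime $p$, the residue $p\,g_{k,2}(p)\bmod p$ computed in $\Z_{(p)}$ via the recurrence \eqref{eq:rec_of_(k,l)-Goebel} depends only on $k\bmod(p-1)$, by Fermat's little theorem. Hence for each $p$ one can form the set of bad residues
\[
B_p\coloneqq\{\,r\in\{0,1,\dots,p-2\}\mid p\,g_{r,2}(p)\not\equiv 0\pmod p\text{ in }\Z_{(p)}\,\},
\]
and every $k$ with $k\bmod(p-1)\in B_p$ then satisfies $g_{k,2}(p)\notin\Z_{(p)}$, so that $N_k\le p$. Computing $B_p$ amounts to iterating the recurrence $p$ steps modulo $p$ for each of the $p-1$ starting residues, which is routine.

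Second, the theorem becomes the assertion that
\[
[2,10^{14}]\cap\Z\subseteq\bigcup_{p\le 29363}\ \bigcup_{r\in B_p}\{\,k\in\Z\mid k\equiv r\pmod{p-1}\,\},
\]
where $p$ ranges over primes. I would precompute $B_p$ for all primes $p\le 29363$, then run a segmented sieve over $[2,10^{14}]$: process the interval in blocks small enough to fit in memory and, in each block, strike out every $k$ lying in a marked class $k\equiv r\pmod{p-1}$ with $r\in B_p$, recording $p$ as an upper bound for $N_k$. If every integer in the range is struck out, then each carries a certificate $N_k\le p\le 29363$, which proves the claim, with $29363$ being the largest prime whose bad residues are actually needed.

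The main obstacle is scale rather than any conceptual difficulty. A flat array of length $10^{14}$ is infeasible, so the sieve must be segmented and the bookkeeping of which pairs $(p,r)$ hit a given block handled carefully; moreover the precomputation of the $B_p$ costs on the order of $\sum_{p\le 29363}p^2$ operations, which is large and must be organized to terminate in practice. One must also be confident that the covering is genuinely complete: the delicate $k$ are those congruent to $1$ modulo large primorials (as the data on $\overline{N}_{1\bmod p\#}$ suggests), for which only comparatively large primes $p$ supply a bad class, so the sieve has to reach far enough before every residue is hit. Verifying that no exceptional $k\le 10^{14}$ survives, and that the largest required prime is exactly $29363$, is where the computation must be trusted and, ideally, independently cross-checked.
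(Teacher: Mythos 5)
Your proposal is correct and matches the paper's own method: the authors likewise precompute, for each prime $p<30000$, the residue classes $k\bmod(p-1)$ with $pg_{k,2}(p)\not\equiv 0\pmod p$ (valid by Fermat's little theorem), and then sieve $[2,10^{14}]$ by these classes --- progressively by primes $p<15000$, then $p<20000$, $p<25000$, and finally up to $p=29363$, which is exactly the prime needed for the last surviving integer $26626531900321$. The only differences are implementation details (their staged GPU sieve versus your single segmented sieve), not the mathematical argument.
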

Refer to \cref{subsec:algorithms} for an explanation of our algorithm for this result.

\subsection{Computation of $\#J_p$, $l_{\mathrm{L}}$, and $l_{\mathrm{R}}$ for varying $p\equiv 1\pmod{4}$}\label{subsec:comput_results:Jp_data}

In \cref{tab:jp_length}, we present the value of $\#J_p$, $l_\mathrm{L}$, and $l_\mathrm{R}$ as defined in \cref{thm:main3} for the first 32 prime numbers $p$ with $p\geq 13$ and $p\equiv 1\pmod{4}$.
Although $l_\mathrm{L} > 2$ for the first few dozen primes $p\equiv 1\pmod{4}$, there also exist prime numbers for which $l_\mathrm{L} = 2$, as we have noted in \cref{rem:two_in_Jp}.
In addition, we see that the general trend is that $\#J_p$ tends to increase with $p$, although it does not increase strictly monotonically.

\begin{table}[t]
    \centering \small
\begin{tabular}{rrrr|rrrr|rrrr|rrrr}
\toprule
$p$ & $\#J_p$ & $l_L$ & $l_R$ & $p$ & $\#J_p$ & $l_L$ & $l_R$ & $p$ & $\#J_p$ & $l_L$ & $l_R$ & $p$ & $\#J_p$ & $l_L$ & $l_R$ \\
\midrule
13 & 3 & 4 & 10 & 89 & 38 & 8 & 84 & 173 & 80 & 4 & 164 & 269 & 122 & 16 & 260 \\
17 & 3 & 8 & 14 & 97 & 43 & 8 & 94 & 181 & 78 & 14 & 170 & 277 & 114 & 22 & 250 \\
29 & 8 & 12 & 28 & 101 & 42 & 4 & 88 & 193 & 85 & 6 & 176 & 281 & 116 & 16 & 248 \\
37 & 12 & 10 & 34 & 109 & 45 & 12 & 102 & 197 & 91 & 14 & 196 & 293 & 127 & 30 & 284 \\
41 & 14 & 8 & 36 & 113 & 41 & 30 & 112 & 229 & 103 & 12 & 218 & 313 & 136 & 2 & 274 \\
53 & 21 & 10 & 52 & 137 & 60 & 8 & 128 & 233 & 102 & 14 & 218 & 317 & 149 & 10 & 308 \\
61 & 22 & 12 & 56 & 149 & 64 & 20 & 148 & 241 & 106 & 24 & 236 & 337 & 161 & 6 & 328 \\
73 & 27 & 16 & 70 & 157 & 63 & 4 & 130 & 257 & 119 & 8 & 246 & 349 & 162 & 20 & 344 \\
\bottomrule
\end{tabular}
    \caption{First 32 values of $\#J_p$, $l_{\mathrm{L}}$, and $l_{\mathrm{R}}$ for primes $p\equiv 1\pmod{4}$ with $p\geq 13$.}
    \label{tab:jp_length}
\end{table}

\begin{figure}[t]
\centering
\hspace{-0.15\textwidth}
\begin{subfigure}{0.38\textwidth}
    \includegraphics[height=\linewidth]{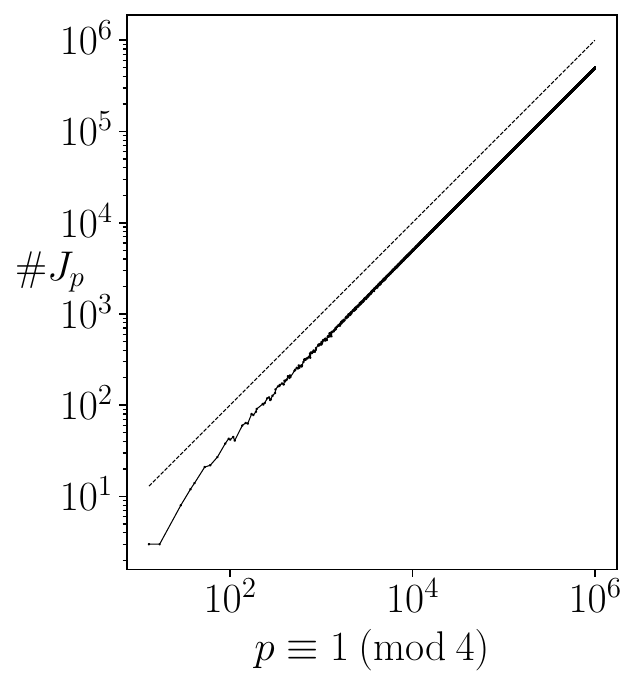}
\end{subfigure}
\begin{subfigure}{0.38\textwidth}
    \includegraphics[height=\linewidth]{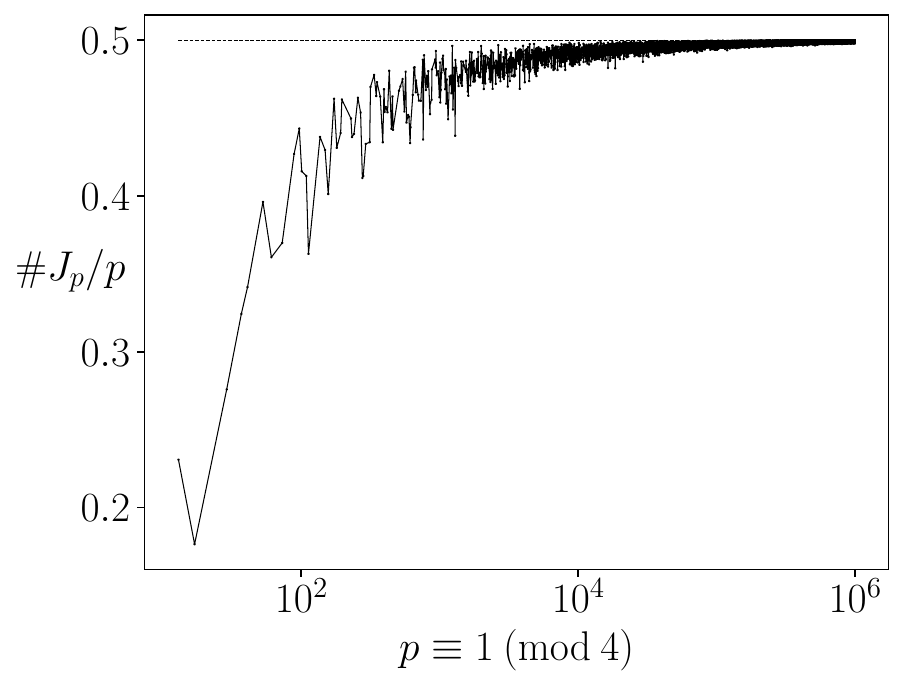}
\end{subfigure}
\caption{Plots of $\#J_p$ and $\#J_p/p$ against varying prime numbers $p\equiv 1\pmod{4}$. In the left panel, the dashed line represents the identity line, where the values along the horizontal and vertical axes are equal to each other. Note that both the axes are in logarithmic scale. In the right panel, the horizontal dashed line represents the value 0.5 on the vertical axis. Note that the horizontal axis is in logarithmic scale.}
\label{fig:J_p}
\end{figure}

In order to investigate the behavior of $\#J_p$ for larger values of $p$, we computed the exact values of $\#J_p$ for $p$ up to $10^6$ based on the algorithm described in \cref{subsec:algorithms}.
The full data of computed values of $\#J_p$, $l_{\mathrm{L}}$, and $l_{\mathrm{R}}$ for $p\equiv 1\pmod{4}$ with $13\leq p\leq 10^6$ is available at \cite{Kobayashi_github}.
In the left panel of \cref{fig:J_p}, we can clearly see that $\#J_p$ steadily increases with $p$.

Since both the axes of the plot are in logarithmic scale in the left panel, the curve of $\#J_p$ against $p$ for larger values of $p$, which is nearly parallel to the identity line, indicates that $\#J_p$ is approximately proportional to $p$.
By calculating and plotting $\#J_p/p$ against $p$, we can see the overall level of the constant of proportionality for this relationship.
Consequently, we obtained the right panel of \cref{fig:J_p}, which visualizes the tendency of $\#J_p/p$ approaching 0.5 --- the highest possible supremum --- as $p$ increases, and thus supports \cref{conj:Num_Jp_over_p_half}.

\subsection{On algorithms}\label{subsec:algorithms}
In \cref{alg:gobel_integrality}, we present the pseudocode for calculating the value of $N_k$ for given values of $k$.
The main part of our algorithm (lines 31--42) works by iteratively determining whether $g_{k,2}(n)\in\mathbb{Z}$ or not under the assumption that $g_{k,2}(i)\in\mathbb{Z}$ for all $1\leq i<n$.
In order to determine the exact value of $N_k$, it is sufficient to start this procedure from $n=2$ (line 32), as $g_{k,2}(1)=2\in\mathbb{Z}$ holds by definition.

For each iteration of $n_\mathrm{max}$, the \texttt{CUMULATIVE\_PRODUCT} function is used at first to obtain the appropriate initial modulus $P$ (see lines 1--18) with $P = \prod_{p\mid n} p^{\nu_p(n!)}$, where $\nu_p$ denotes the $p$-adic valuation for a prime number $p$.
Once the appropriate initial modulus $P$ for $g_{k,2}(1)$ is obtained, we can determine the residue class modulo $n$ that $ng_{k,2}(n)\in \Z$ belongs to.
This is carried out step-by-step using the algorithm outlined as the \texttt{GOBEL\_PROCEED} function (lines 20--29).
Note that the modulus decreases as the iteration proceeds due to the $(n+1)^{-1}$ term in the recurrence relation.
In this iterative process, the integrality of $g_{k,2}(i)$ for $i$ up to $n-1$ can be assumed, because the sequence has already been confirmed to be integral up to the $(n-1)$-st term when the for-loop of lines 32--41 is executed for $n_\mathrm{max} = n$.
If the variable $g_\textrm{mult}$, which is a representative of $(n+1)g_{k,2}(n+1)$ modulo $d$, is not divisible by the greatest common divisor of $d$ and $n+1$ (line 24), the function returns a null result to indicate that the integrality breaks at $n+1$.

The implementation of the function \texttt{powmod} is critical to the efficiency of \cref{alg:gobel_integrality}.
Note that this function is called $O((N_k)^2)$ times as $n_\mathrm{max}$ increments in the for-loop of lines 32--41.
In the previous implementations of calculation of $N_{k,l}$ values \cites{MatsuhiraMatsusakaTsuchida2024,GimaEtal2024+}, the modulo operation was performed \emph{after} the exponentiation.
However, because we only need the residue class modulo $d$, the explicit exponentiation is unnecessary.
Instead, we used the built-in \texttt{pow} function in Python (version 3.8 or later), which employs left-to-right binary exponentiation and left-to-right $k$-ary sliding window exponentiation algorithms, depending on whether the bit length of the power exponent exceeds 60.
The \texttt{invmod} function is also readily available as the \texttt{pow} function in Python (version 3.8 or later) by setting the exponent argument to a negative integer and thus employed in our implementation.
For \texttt{factorint} and \texttt{gcd}, we used the \texttt{ntheory.factorint} function from the \texttt{SymPy} package (version 1.12) and the \texttt{gcd} function of the built-in \texttt{math} module in Python, respectively.
For future coding, we also note that the implementation of \texttt{CUMULATIVE\_PRODUCT} using the big integer type in Python could be more efficient if we instead used the dictionary that represents the prime factorization as an object that represents an integer.
\begin{algorithm}
\caption{Pseudocode for computing $N_k$ for given values of $k$.}\label{alg:gobel_integrality}
\begin{algorithmic}[1]
\Require \texttt{factorint}($i$) is a dictionary \{\textrm{prime}:~exponent\}, the prime factorization of $i$
\Require \texttt{powmod}($x, y, m$) is the modular exponentiation $x^y\:\textrm{mod}\:m$ for $x\in\mathbb{Z}, y,m\in\mathbb{Z}_{>0}$
\Require \texttt{invmod}($x, m$) is the modular multiplicative inverse $\textrm{mod}\:m$ of $x\in\mathbb{Z}$
\Require \texttt{gcd}($x, y$) is the greatest common divisor of integers $x$ and $y$
\vspace{0.7\baselineskip}

\State $L_f, L_c \gets$ List of (pre-determined) length ${i_\mathrm{max}}$ 
\State $D \gets$ Empty dictionary
\For{$i \gets 1$ to $i_\mathrm{max}$}
    \State $L_f[i] \gets \texttt{factorint}(i)$
    \State Add to $D$ every key in $L_f[i]$ that is not in $D$, with the corresponding value 0
    \For{every key $p$ of $L_f[i]$}
        \State $D[p] \gets D[p] + L_f[i][p]$
    \EndFor
    \State $L_c[i] \gets D$
\EndFor
\State {}

\Function{\texttt{CUMULATIVE\_PRODUCT}}{$n, L_f, L_c$}
    \State $P \gets 1$
    \For{every $p$ in keys of $L_f[n]$}
        \State $P \gets P*p^{L_c[n][p]}$
    \EndFor
    \State \algorithmicreturn~$P$
\EndFunction
\State {}

\Function{\texttt{GOBEL\_PROCEED}}{$g, d, n, k$}
    \State $g_\textrm{mult} \gets ng + \texttt{powmod}(g, k, d)$
    \State $m_\textrm{gcd} \gets \texttt{gcd}(d, n + 1)$
    \State $d_\textrm{next} \gets d/m_\textrm{gcd}$
    \If{$g_\textrm{mult}$ is not divisible by $m_\textrm{gcd}$}
        \State \algorithmicreturn~Null
    \EndIf
    \State $g_\textrm{next} \gets \texttt{invmod}((n + 1)/m_\textrm{gcd}, d_\textrm{next})*(g_\textrm{mult}/m_\textrm{gcd})$
    \State \algorithmicreturn~$(g_\textrm{next}\:\mathrm{mod}\: d_\textrm{next}, d_\textrm{next})$
\EndFunction
\State {}

\For{every $k$ for which $N_k$ is to be determined} \Comment{Main part of the algorithm}
\For{$n_\mathrm{max} \gets 2$ to the length of $L_f$}
    \State $(g, d) \gets (2, \texttt{CUMULATIVE\_PRODUCT}(n_\mathrm{max}, L_f, L_c))$
    \For{$n \gets 1$ to ($n_\mathrm{max} - 1)$}
        \State $(g, d) \gets \texttt{GOBEL\_PROCEED}(g, d, n, k)$
    \EndFor
    \If{$(g, d)$ is Null}
        \State Print $(k, n_\mathrm{max})$ \Comment{The value of $n_\mathrm{max}$ here should be $N_k$}
        \State \textbf{break}
    \EndIf
\EndFor
\EndFor
\end{algorithmic}
\end{algorithm}

If we are interested only in the non-integrality of $(k,l)$-G\"obel sequences and not in the exact value of $N_{k,l}$, we can perform the numerical search more efficiently.
We first describe the case $l=2$.
Recall that $N_k = 19$ for all $k\equiv 6,14\pmod{18}$, and the proof consists of two parts: first, the integrality of $k$-G\"obel sequences for any $k\geq 2$ does not break for $n < 19$ and second, $19g_{k,2}(19)\not\equiv 0\pmod{19}$ for $k\equiv 6,14 \pmod{18}$. Omitting the first part of the proof, we still obtain $N_k \leq 19$ for all $k\equiv 6,14\pmod{18}$.
In general cases, the appropriate initial modulus $P$ can become very large for composite $n$. However, because $P=n$ for prime $n$, it is comparatively easy to complete all the possible cases modulo $P$.
Moreover, the computational result that $N_k \leq p$ for $k = a$ extends to all $k\equiv a\pmod{p-1}$ by Fermat's little theorem.
Therefore, for a given range of integers $k$, we can progressively sieve out $k\equiv a\pmod{p-1}$ for $a$ such that $pg_{a,2}(p)\not\equiv 0\pmod{p}$.
The algorithm can also be made more efficient for prime $n$, as it is guaranteed that $m_\textrm{gcd} = 1$ for $n < n_\mathrm{max}$ at line 22 of \cref{alg:gobel_integrality}.

In the actual computation, the residue classes to be sieved out from $k$ were precomputed for prime $p < 30000$.
We first sieved $k\leq 10^{14}$ by the residue classes modulo ($p-1$) for prime $p < 15000$, which left 132603 integers greater than 1.
This calculation took approximately 22 days with a single NVIDIA RTX A6000 GPU and the \texttt{Tensor} class from \texttt{PyTorch} package.
These resulting integers were then subjected to sequential sieving to obtain 2810 and 56 integers that remained after sieving by the residue classes modulo ($p-1$) for prime $p<20000$ and $p<25000$, respectively.
The last integer, 26626531900321, was sieved out by 26059 modulo 29362.
The full data and actual codes are available at \cite{Kobayashi_github}.

We took almost the same approach to study the $(k,l)$-G\"obel sequences computationally.
In this case, it is sufficient to replace the initial value 2 for $g$ with the parameter $l$ at line 33 of \cref{alg:gobel_integrality}. We used this modified algorithm to obtain the values of $(k,l)$ such that $pg_{k,l}(p)\not\equiv 0\pmod{p}$ for prime $p$ up to 3000. The result was used to produce \cref{fig:blue_dots}.

Lastly, we shortly describe the algorithm for computing $l_{\mathrm{L}}$ and $l_{\mathrm{R}}$.
According to \cref{thm:main3},
if there exists an $l \in J_p$,
then $l_\mathrm{L}\leq l<l_\mathrm{R}$.
Furthermore, $0 \in I_p^\mathrm{L}$ and $p-1\in I_p^\mathrm{R}$ always hold.
Therefore, we can apply the binary search to determine $l_\mathrm{L}$ and $l_\mathrm{R}$ efficiently once we found an $l \in J_p$.
We computed the actual values of $l_\mathrm{L}$ and $l_\mathrm{R}$ for $p\equiv 1\pmod{4}$ up to $10^6$. 
The cardinality of $J_p$ was calculated by $\#J_p = (l_\mathrm{R} - l_\mathrm{L})/2$.

% --------------------------------------------------------------------------

% --------------------------------------------------------------------------

\begin{thebibliography}{GMMY}
\bibitem[GMMY]{GimaEtal2024+}
H. Gima, T. Matsusaka, T. Miyazaki, S. Yara, \emph{On integrality and asymptotic behavior of the $(k,l)$-G\"{o}bel sequences}, J. Integer Seq.~\textbf{27} (2024) Art. 24.8.1, 16pp.
\bibitem[I]{Ibstedt1990}
H. Ibstedt, \emph{Some sequences of large integers}, Fibonacci Quart.~\textbf{28} (1990), 200--203.
\bibitem[K]{Kobayashi_github}
Y. Kobayashi, \emph{goebel-seq: Codes and computational results regarding $(k, l)$-G\"{o}bel sequences}, available at \url{https://github.com/yuh-kob/goebel-seq}.
\bibitem[KS]{KobayashiSeki2023}
D. Kobayashi, S. Seki, \emph{Seisu-tan 1: A strange tale of integers' world}, Nippon Hyoron Sha (in Japanese), 2023.
\bibitem[MMT]{MatsuhiraMatsusakaTsuchida2024}
R. Matsuhira, T. Matsusaka, K. Tsuchida, \emph{How long can $k$-G\"{o}bel sequences remain integers?}, Amer.~Math.~Monthly \textbf{131} (2024), 784--793.
\bibitem[S]{Sloane}
N. J. A. Sloane, \emph{The on-line encyclopedia of integer sequences}, available at \url{https://oeis.org}.
\end{thebibliography}
\end{document}